\theoremstyle{plain}
 \newtheorem{theorem}{Theorem}[section]
 \newtheorem{prop}{Proposition}[section]
 \newtheorem{lem}{Lemma}[section]
\theoremstyle{Definition}
 \newtheorem{exm}{Example}[section]
 \newtheorem{dfn}{Definition}[section]
\theoremstyle{remark}
 \numberwithin{equation}{section}
\renewcommand{\leq}{\leqslant}
\renewcommand{\geq}{\geqslant}
\renewcommand{\setminus}{\smallsetminus}
\title[Completely Invariant Escaping Set of Transcendental  Semigroup]{Completely Invariant Escaping Set of Transcendental Semigroup}
\subjclass[2010]{37F10, 30D05}
\keywords{Transcendental semigroup, escaping set, S-completely invariant escaping set, completely invariant escaping set etc.}
\author[B. H. Subedi]{\bfseries  Bishnu Hari Subedi}
\address{ 
Central Department of Mathematics \\ 
Institute of Science and Technology   \\ 
Tribhuvan University   \\ 
Kirtipur, Kathmandu\\
Nepal}
\email{subedi.abs@gmail.com / subedi\_bh@cdmathtu.edu.np }
\author[A. Singh]{Ajaya Singh}
\address{Central Department of Mathematics, Institute of Science and Technology, Tribhuvan University, Kirtipur, Kathmandu, Nepal }
\email{singh.ajaya1@gmail.com / singh\_a@cdmathtu.edu.np} 
\thanks{This research work of the first author is supported by PhD Faculty fellowship from University Grants Commission of Nepal} 
\begin{document}

{\begin{flushleft}\baselineskip9pt\scriptsize
\end{flushleft}}
\vspace{18mm} \setcounter{page}{1} \thispagestyle{empty}

\begin{abstract}
For a non-trivial transcendental semigroup, escaping set $ I(S) $ is in general S-forward invariant and it is S-completely invariant if semigroup $ S $ is abelian.  In the contrary of this result, we investigate completely invariant escaping set $ K(S) $ in different way even if semigroup $ S $ is not abelian and we discuss some properties and structure of such type of escaping set. Also, we establish some relations between completely invariant escaping set $ K(S) $ and the general escaping set $ I(S) $.
\end{abstract}

\maketitle

\section{Introduction} 

Throughout this paper, we denote the \textit{complex plane} by $\mathbb{C}$ and set of integers greater than zero by $\mathbb{N}$. 
We assume the function $f:\mathbb{C}\rightarrow\mathbb{C}$ is \textit{transcendental entire function}  unless otherwise stated. 
For any $n\in\mathbb{N}, \;\; f^{n}$ always denotes the nth \textit{iterates} of $f$. Let $ f $ be an entire function. The set of the form
$$
I(f) = \{z\in \mathbb{C}:f^n(z)\rightarrow \infty \textrm{ as } n\rightarrow \infty \}
$$
is called an \textit{escaping set} and any point $ z \in I(S) $ is called \textit{escaping point}. For a transcendental entire function $f$, the escaping set $I(f)$ was first studied by A. Eremenko \cite{ere}. He himself showed that 
 $I(f)\not= \emptyset$; the boundary of this set is a Julia set $ J(f) $ (that is, $ J(f) =\partial I(f) $);
 $I(f)\cap J(f)\not = \emptyset$; and 
 $\overline{I(f)}$ has no bounded component. By motivating from this last statement, he posed a question: \textit{Is every component of $ I(f) $ unbounded?}. This question is considered  as an important open problem of transcendental dynamics and nowadays it is famous as \textit{Eremenko's conjecture}. Note that the complement of Julia set $ J(f) $ in complex plane $ \mathbb{C} $ is a \textit{Fatou set} $F(f)$ and any maximally connected open subset of a Fatou set is called a \textit{Fatou component}. 
 
There are two types of points for which the inverse of entire map $ f $ is not well defined, namely critical values and asymptotic values.
Recall that the set $ C(f) = \{z\in \mathbb{C} : f^{\prime}(z) = 0 \}$ is the set of \textit{critical points} of the transcendental entire function $ f $ and  the set $CV(f) = \{w\in \mathbb{C}: w = f(z)\;\ \text{such that}\;\ f^{\prime}(z) = 0\} $ of all images of all critical points is called the set of \textit{critical values}.  The set 
$AV(f)$ consisting of all  $w\in \mathbb{C}$ such that there exists a curve (asymptotic path) $\Gamma:[0, \infty) \to \mathbb{C}$ so that $\Gamma(t)\to\infty$ and $f(\Gamma(t))\to w$ as $t\to\infty$ is called the set of \textit{asymptotic values} of $ f $ and the set
$SV(f) =  \overline{(CV(f)\cup AV(f))}$
is called the set of \textit{singular values} of $ f $.  Note that this set is coincide with the set of singularities of the inverse function $f^{-1}$ of $f$, and so this set is also denoted by $\text{Sing}(f^{-1})$.  Among the entire functions, only transcendental entire functions may have asymptotic values: clearly polynomials cannot have finite asymptotic values. If $SV(f)$ is a bounded set, then $f$ is said to be of \textit{bounded type}.                                                                                                                             The set $\mathscr{B} = \{f: f\;\  \textrm{is of bounded type}\}$
is called  \textit{Eremenko-Lyubich class}.

The main concern of this paper is to the study of the completely invariant escaping set under transcendental semigroup. So we start our formal study from the notion of transcendental semigroup. 
\begin{dfn}[\textbf{Transcendental semigroup and subsemigroup}]\label{ts}
Let $ A = \{f_i: i\in \mathbb{N}\} $ be a set of transcendental entire functions $ f_{i}: \mathbb{C}\rightarrow \mathbb{C} $. A \textit{transcendental semigroup} $S$ is a semigroup generated by the set $ A $ with semigroup operation being the functional composition. We denote this semigroup by $S = \langle A \rangle  = \langle f_{1}, f_{2}, f_{3}, \cdots, f_{n}, \cdots \rangle$. A non-empty subset $ T $ of transcendental semigroup $ S $ is a subsemigroup of $ S $ if $ f \circ g \in T $ for all $ f, \; g \in T $.
\end{dfn}

A semigroup generated by finitely many functions $f_i, (i = 1, 2, 3,\ldots, n) $  is called \textit{finitely generated transcendental semigroup}. We write $S= \langle f_1,f_2,\ldots,f_n\rangle$.
 If $S$ is generated by only one transcendental entire function $f$, then $S$ is \textit{cyclic transcendental  semigroup}. We write $S = \langle f\rangle$. In this case, each $g \in S$ can be written as $g = f^n$, where $f^n$ is the nth iterates of $f$ with itself. Note that in our forthcoming study of transcendental semigroup theory, we say $S = \langle f\rangle$ is a \textit{trivial semigroup}.  
  
 The transcendental semigroup $S$ is \textit{abelian} if  $f_i\circ f_j =f_j\circ f_i$  for all generators $f_{i}$ and $f_{j}$ of $ S $. We say semigroup $ S $ is  said to be of \textit{bounded type}  if each generator $ f_{i},\;( i \in \mathbb{N})$ of  $S$  is taken from the Eremenko-Lyubich class $ \mathscr{B} $. 
 
Based on the Fatou-Julia-Eremenko theory of a complex analytic function, the Fatou set, Julia set and escaping set in the settings of semigroup are defined as follows.
\begin{dfn}[\textbf{Fatou set, Julia set and escaping set}]\label{2ab} 
\textit{Fatou set} of the transcendental semigroup $S$ is defined by
  \[F (S) = \{z \in \mathbb{C}: S\;\ \textrm{is normal in a neighborhood of}\;\ z\}\] 
The \textit{Julia set} of $S$ is defined by $J(S) = \mathbb{C} - F(S)$
 and the \textit{escaping set} of $S$ by 
        \[I(S) = \{z \in \mathbb{C}: \;  f^n(z)\rightarrow \infty \;\ \textrm{as} \;\ n \rightarrow \infty\;\   \textrm{for all}\;\ f \in S\}\]
We call each point of the set $  I(S) $ by \textit{escaping point}.        
\end{dfn} 
Note that above definition of escaping set is slightly different from the definition  of escaping set by Dinesh Kumar and Sanjay Kumar {\cite[Definition 2.1]{kum2}}. In {\cite[Theorem 3.2]{sub1}}, we proved that our definition of escaping set is more general than the definition of Dinesh Kumar and Sanjay Kumar.  

The following immediate relation hold for any $ f \in S $  from the definition \ref{2ab} of escaping set. 

\begin{theorem}\label{1c}
Let $ S $  be a transcendental semigroup. Then
$I(S) \subset I(f)$ for all $f \in S$  and hence  $I(S)\subset \bigcap_{f\in S}I(f)$. 
\end{theorem}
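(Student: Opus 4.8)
The plan is to read the conclusion straight off Definition \ref{2ab}; no auxiliary result is needed. The statement merely records, in set-theoretic form, that the escaping set of the semigroup sits inside the escaping set of each of its elements, so the proof will be short and purely formal.

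First I would fix an arbitrary $f \in S$ and let $z \in I(S)$ be arbitrary. By the definition of the escaping set of a transcendental semigroup, the condition $z \in I(S)$ means that $g^{n}(z) \rightarrow \infty$ as $n \rightarrow \infty$ for \emph{every} $g \in S$. Specializing this universally quantified statement to the single element $g = f$ yields $f^{n}(z) \rightarrow \infty$ as $n \rightarrow \infty$, which is precisely the defining condition for $z \in I(f)$. Since $z$ was an arbitrary point of $I(S)$, this establishes $I(S) \subset I(f)$.

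Finally, because $f \in S$ was itself arbitrary, the inclusion $I(S) \subset I(f)$ holds simultaneously for all $f \in S$, and hence $I(S)$ is contained in $\bigcap_{f \in S} I(f)$. The argument is entirely formal: the only point requiring care is not to conflate the two notions of escaping set in play (the one attached to a semigroup and the one attached to a single transcendental entire function), and to note that it is exactly the ``for all $f \in S$'' clause in the definition of $I(S)$ that legitimizes the specialization step. There is therefore no genuine obstacle to overcome here.
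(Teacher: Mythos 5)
Your proof is correct and is exactly the argument the paper has in mind: the paper states this result as immediate from Definition \ref{2ab}, and your specialization of the universally quantified condition in $I(S)$ to a single $f \in S$ is that immediate argument written out. Nothing further is needed.
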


From this theorem \ref{1c}, we can say that the escaping set may be empty. Note that $I(f)\not = \emptyset$  in classical iteration theory \cite{ere}. Dinesh Kumar and Sanjay Kumar {\cite [Theorem 2.5]{kum2}} have mentioned the following transcendental  semigroup $S$, where $I(S)$ is an empty set.
\begin{theorem}\label{e}
The transcendental semigroup $S = \langle f_{1}, f_{2}\rangle$  generated by two functions $f_{1}$ and $ f_{2} $ from  respectively two parameter families of functions 
$\mathscr{F} = \{e^{-z+\gamma}+c\;  \text{where}\;  \gamma, c  \in \mathbb{C} \; \text{and}\;  Re(\gamma)<0, \; Re(c)\geq 1\}$ 
and $\mathscr{F^{\prime}} =\{e^{z+\mu}+d, \; \text{where}\;  \mu, d\in \mathbb{C} \; \text{and}\; Re(\mu)<0,  \; Re(d)\leq -1\}$ has empty escaping set $I(S)$. \end{theorem}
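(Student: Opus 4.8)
The plan is to show that the two generators $f_1 \in \mathscr{F}$ and $f_2 \in \mathscr{F}'$ are, up to affine conjugacy and up to the sign of the exponent, the prototypical maps $z \mapsto e^{\pm z}$ shifted into a half-plane where the dynamics trap every orbit, and then to exploit the fact that membership in $I(S)$ requires escape along \emph{every} element of $S$, in particular along $f_1$, along $f_2$, and along the two composites $f_1 \circ f_2$ and $f_2 \circ f_1$. The key structural observation is a half-plane invariance: writing $f_1(z) = e^{-z+\gamma} + c$ with $\operatorname{Re}(\gamma) < 0$ and $\operatorname{Re}(c) \geq 1$, I would first verify that $f_1$ maps the right half-plane $H = \{\operatorname{Re}(z) \geq 1\}$ into itself, because for $\operatorname{Re}(z) \geq 1$ we have $|e^{-z+\gamma}| = e^{-\operatorname{Re}(z)+\operatorname{Re}(\gamma)} \leq e^{-1+\operatorname{Re}(\gamma)} < 1$, so $\operatorname{Re}(f_1(z)) \geq \operatorname{Re}(c) - 1 \geq 0$; a slightly more careful estimate (using $\operatorname{Re}(c)\ge 1$ and that the perturbation term has modulus less than $1$) pins $f_1(H)$ inside a \emph{bounded} vertical strip $\{0 \le \operatorname{Re}(z) \le \operatorname{Re}(c)+1,\ |\operatorname{Im}(z) - \operatorname{Im}(c)| \le 1\}$ or similar. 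Symmetrically, $f_2(z) = e^{z+\mu} + d$ with $\operatorname{Re}(d) \leq -1$ maps the left half-plane $\{\operatorname{Re}(z) \leq -1\}$ into a bounded strip near $d$.

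Next I would analyze the composites. The point is that $f_1$ sends its natural attracting region into a set with real part bounded \emph{above}, which is exactly the region on which $f_2$ contracts wildly: if $z$ has $\operatorname{Re}(z) \leq -1$ then $|e^{z+\mu}| < 1$ so $f_2(z)$ again lands in a bounded set. Conversely $f_2$ sends things into a region of bounded (indeed very negative) real part, on which $f_1$ contracts. So I would show that $f_2 \circ f_1$ maps some half-plane (say $\{\operatorname{Re}(z)\ge 1\}$, or all of $\mathbb C$ after one application of $f_1$) into a \emph{bounded} set $B$, and that $f_2 \circ f_1$ maps $B$ into $B$ — i.e. $g := f_2 \circ f_1$ has a bounded forward-invariant set containing the image of everything. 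Since $g \in S$, any $z \in I(S)$ must satisfy $g^n(z) \to \infty$; but after one application of $g$ the orbit lies in the bounded set $B$ and stays there, a contradiction. Hence $I(S) = \emptyset$. (One must be slightly careful about whether $g$ maps \emph{all} of $\mathbb C$ into a bounded set or only a half-plane; if only a half-plane, use instead that $I(S)\subset I(g)\subset I(f_1\circ g)$ etc., or iterate: $f_1$ followed by $f_2$ already forces boundedness because $f_1(\mathbb C)$ omits a left half-plane only in modulus — here I would instead observe $e^{-z+\gamma}+c$ and $e^{z+\mu}+d$ so that $f_2(f_1(z)) = e^{f_1(z)+\mu}+d = e^{e^{-z+\gamma}+c+\mu}+d$, and since $\operatorname{Re}(e^{-z+\gamma})$ is bounded, $\operatorname{Re}(f_1(z)+\mu)$ is bounded above, so $|e^{f_1(z)+\mu}|$ is bounded above, so $f_2\circ f_1$ maps \emph{all of} $\mathbb{C}$ into a bounded set.) This last computation is clean and gives the result directly: $f_2 \circ f_1(\mathbb{C})$ is bounded, so no orbit of $f_2 \circ f_1$ escapes, so $I(S) = \emptyset$ by Theorem \ref{1c}.

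The main obstacle I anticipate is purely bookkeeping with the constants: one must track the bounds $\operatorname{Re}(\gamma) < 0$, $\operatorname{Re}(\mu) < 0$, $\operatorname{Re}(c) \geq 1$, $\operatorname{Re}(d) \leq -1$ carefully enough to guarantee that $\operatorname{Re}(e^{-z+\gamma} + c + \mu)$ is genuinely bounded above by a constant independent of $z$ — which it is, since $|e^{-z+\gamma}| = e^{-\operatorname{Re}(z)+\operatorname{Re}(\gamma)}$ is not bounded as $\operatorname{Re}(z)\to -\infty$. So in fact $f_2 \circ f_1$ is \emph{not} bounded on all of $\mathbb{C}$, and the argument genuinely needs the half-plane step: restrict first to where $f_1$ is tame. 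The honest route is therefore the iterative one — show $f_1$ maps $\{\operatorname{Re} z \ge 1\}$ into a bounded strip $B_1$ with $\operatorname{Re} < M$, show $f_2$ maps $B_1$ into a bounded set $B_2$ with $\operatorname{Re} < -1$, show $f_1$ maps $B_2$ back into $\{\operatorname{Re} z \ge 1\}$ (indeed into $B_1$), and likewise handle orbits starting with $\operatorname{Re} z < 1$ by noting $f_2(\{\operatorname{Re} z \le -1\})$ is bounded while $f_2(\{\operatorname{Re} z \ge 1\})$ escapes to the left — so that for \emph{any} starting point, within two steps the orbit under the word $(f_1 f_2)^k$ or $(f_2 f_1)^k$ enters a fixed bounded region and is thereafter trapped. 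Then Theorem \ref{1c} applied to this composite element of $S$ forces $I(S)=\emptyset$. Writing down explicit values of $M$ and the strips, and verifying the three inclusions, is the only real work, and it is elementary.
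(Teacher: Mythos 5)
Your preliminary half-plane estimates are the right raw material, but the strategy you settle on cannot work. You aim to produce a single composite element $g=f_2\circ f_1$ (or $f_1\circ f_2$) of $S$ such that every orbit of $g$ is eventually trapped in a bounded set, i.e.\ $I(g)=\emptyset$, and then conclude via Theorem \ref{1c}. But every element of $S$ is a composition of transcendental entire functions, hence itself transcendental entire, and by Eremenko's theorem (quoted in the introduction of this paper) $I(g)\neq\emptyset$ for every such $g$. So no word in $S$ can have all orbits bounded, and indeed your ``three inclusions'' break down: $B_2$ lies near $d$ with $\operatorname{Re}(d)\le-1$ possibly very negative, and on such points $e^{-z+\gamma}$ has enormous modulus with uncontrolled argument, so $f_1(B_2)$ is (roughly) a huge circle about $c$ reaching far into the left half-plane, not a subset of $\{\operatorname{Re} z\ge1\}$; likewise $f_2(B_1)\subset\{\operatorname{Re} z<-1\}$ already fails when $\operatorname{Re}(\mu)$ is only slightly negative, and the claim that every orbit of $(f_2 f_1)^k$ is trapped within two steps is false. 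The flaw is not bookkeeping of constants; the whole ``one trapped word'' route contradicts $I(g)\neq\emptyset$.

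The repair is to use your half-plane computation on each generator \emph{separately} and exploit that $I(S)$ demands escape under both $f_1$ and $f_2$ individually. If $\operatorname{Re}(z)\ge0$ then $|e^{-z+\gamma}|\le e^{\operatorname{Re}(\gamma)}<1$, so $f_1(z)$ lies in the disk $D\bigl(c,e^{\operatorname{Re}(\gamma)}\bigr)$, which is contained in $\{\operatorname{Re} z>0\}$ (as $\operatorname{Re}(c)\ge1$) and is forward invariant under $f_1$ by the same estimate; hence the $f_1$-orbit of any point with $\operatorname{Re}(z)\ge0$ is bounded and $I(f_1)\subset\{\operatorname{Re} z<0\}$. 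Symmetrically, $D\bigl(d,e^{\operatorname{Re}(\mu)}\bigr)\subset\{\operatorname{Re} z<0\}$ traps all $f_2$-orbits starting in $\{\operatorname{Re} z\le0\}$, so $I(f_2)\subset\{\operatorname{Re} z>0\}$. Then Theorem \ref{1c} gives $I(S)\subset I(f_1)\cap I(f_2)=\emptyset$. Note that the paper itself states Theorem \ref{e} without proof, citing Kumar and Kumar, and their argument is of exactly this form (disjointness of the escaping sets of the two generators, via attracting half-plane/trapping regions); composite words never need to be considered.
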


There are also transcendental semigroups whose escaping sets are non-empty. 
If escaping set of the generators of transcendental semigroup $ S $  are equal, then it is not difficult to find escaping set of semigroup as shown in the following result. 
\begin{theorem}\label{1c1}
Let $  S =\langle f, g \rangle $ be a transcendental semigroup such that $ I(f) =I(g) $. Then $ I(S) = I(f) = I(g) $. 
\end{theorem}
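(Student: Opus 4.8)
The plan is to prove the two inclusions of $I(S)=I(f)=I(g)$ separately. One of them is free: by Theorem~\ref{1c} we have $I(S)\subseteq I(f)$ and $I(S)\subseteq I(g)$, hence $I(S)\subseteq I(f)\cap I(g)=I(f)=I(g)$ using the hypothesis. Moreover, directly from Definition~\ref{2ab}, $z\in I(S)$ if and only if $z\in I(h)$ for every $h\in S$, i.e. $I(S)=\bigcap_{h\in S}I(h)$; so the reverse inclusion will follow once we show that $I(h)=I(f)$ for \emph{every} $h\in S$. I would prove this last statement by induction on the word length of $h=\phi_{k}\circ\cdots\circ\phi_{1}$ with $\phi_{i}\in\{f,g\}$: the case $k=1$ is the hypothesis, and for the inductive step, writing $h=\phi_{k}\circ h'$ with $h'=\phi_{k-1}\circ\cdots\circ\phi_{1}$, the inductive hypothesis gives $I(h')=I(f)=I(g)$ while $I(\phi_{k})=I(f)=I(g)$ as well, so everything reduces to the composition lemma: \emph{if $p,q$ are transcendental entire with $I(p)=I(q)$, then $I(p\circ q)=I(p)=I(q)$} (this is meaningful since $p\circ q$ is again transcendental entire).

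To prove the lemma, set $E:=I(p)=I(q)$ and recall the elementary fact that $w\in I(f)\Leftrightarrow f(w)\in I(f)$ for any transcendental entire $f$; thus $E$ is completely invariant under $p$, and, being equal to $I(q)$, also under $q$, hence forward invariant under $p\circ q$ with $(p\circ q)^{-1}(E)=E$. For the inclusion $E\subseteq I(p\circ q)$: given $z\in E$, the point $q(z)$ lies in $I(q)=E=I(p)$, so $(p\circ q)(z)=p(q(z))\in I(p)=E$, and iterating, the whole forward orbit $\{(p\circ q)^{n}(z)\}_{n\ge0}$ stays in $E$ — it is exactly here that the \emph{equality} $I(p)=I(q)$ (not just $I(p)\cap I(q)$) is used, since it lets each half-step of the alternating orbit land back in the escaping set of the map to be applied next. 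It then remains to see that an orbit trapped in $E$ must tend to $\infty$, which yields $z\in I(p\circ q)$. For the opposite inclusion $I(p\circ q)\subseteq E$: for $z\in I(p\circ q)$ one has $(q\circ p)^{n}(q(z))=q\bigl((p\circ q)^{n}(z)\bigr)$ and, symmetrically, $(p\circ q)^{n+1}(z)=p\bigl((q\circ p)^{n}(q(z))\bigr)$, so the two orbits are images of one another under $p$ and $q$; combining this with complete invariance of $E$, I would transfer the escape of $(p\circ q)^{n}(z)$ to escape of $p^{n}(z)$ and $q^{n}(z)$, placing $z$ in $I(p)\cap I(q)=E$.

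The hard part will be the middle step: upgrading ``the $(p\circ q)$-orbit of $z$ never leaves $E$'' to ``that orbit tends to $\infty$''. Membership in $E$ only says that each individual orbit point escapes under $p$ (equivalently under $q$); it does not by itself forbid a bounded, or otherwise non-escaping, orbit of the composition. To close this gap I would exploit the complete-invariance structure more carefully — for instance relating the composition orbit to the $p$-orbit of $z$ through the identity $I(f^{m})=I(f)$ together with the interlacing of $p$ and $q$ — and, if that does not suffice, bring in structural information about $\overline{E}=\overline{I(p)}$, such as Eremenko's theorem \cite{ere} that it has no bounded component, which constrains the possible limiting behaviour of the orbit. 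I expect this is the only genuinely delicate point; the rest is bookkeeping with complete invariance and the reduction to the two-map lemma.
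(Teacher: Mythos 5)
Your overall reduction is the same as the paper's: since $I(S)=\bigcap_{h\in S}I(h)$ directly from Definition \ref{2ab}, everything comes down to showing $I(h)=I(f)=I(g)$ for every word $h=\phi_k\circ\cdots\circ\phi_1$ with $\phi_i\in\{f,g\}$, which is exactly how the paper argues (it writes $h=f^{n_k}\circ g^{n_{k-1}}\circ\cdots\circ g^{n_1}$ and invokes $I(f^{n_i})=I(g^{n_j})$). But your proposal stops at an admitted gap, namely the composition lemma: if $I(p)=I(q)=E$ then $I(p\circ q)=E$. What you actually establish is only that the $(p\circ q)$-orbit of a point of $E$ stays in $E$; the step from ``trapped in $E$'' to ``tends to $\infty$'' is missing, and none of the tools you mention closes it. The compactness argument behind $I(f^{m})=I(f)$ does not transfer: if $(p\circ q)^{n}(z)$ had a bounded subsequence $z_{n_j}\to w$, then $z_{n_j+1}=(p\circ q)(z_{n_j})\to(p\circ q)(w)$ is again finite, so no contradiction arises; the hypothesis only says that each individual orbit point $z_n$ escapes under iteration of $p$ alone (a statement about $\lim_{m}p^{m}(z_n)$ for each fixed $n$, with no uniformity in $n$), and nothing forbids a bounded $(p\circ q)$-orbit consisting of such points. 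Eremenko's theorem that $\overline{I(p)}$ has no bounded component concerns components of a closure, not orbits, and imposes no constraint here.

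The reverse inclusion $I(p\circ q)\subseteq E$ of your lemma is likewise not delivered by the proposed ``transfer'': from $q\circ(p\circ q)^{n}=(q\circ p)^{n}\circ q$ and $(p\circ q)^{n}(z)\to\infty$ you cannot conclude $(q\circ p)^{n}(q(z))\to\infty$, because an entire function may have finite asymptotic values and so need not map a sequence tending to $\infty$ to a sequence tending to $\infty$; and even granting that, escape under the two compositions gives no direct information about $p^{n}(z)$ or $q^{n}(z)$, which is what $z\in I(p)\cap I(q)$ means. So both halves of the key lemma remain unproved in your write-up. To be fair, the paper's own proof supplies no argument at this point either: it passes from $I(f^{n_i})=I(g^{n_j})$ to ``$I(h)=I(f)=I(g)$ for all $h\in S$'' by assertion. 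Your proposal thus reproduces the paper's skeleton while honestly isolating the one claim that carries all the content; but as it stands it is not a proof, and closing the gap seems to require either additional hypotheses under which $I(h)=I(f)$ can be verified directly for composites (as in the special families of Theorems \ref{1c2} and \ref{1c3}) or a genuinely new argument.
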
 
\begin{proof}
Since $ I(f) = I(g) $. It follows $ I(f^{n_{i}}) = I(g^{n_{j}}) $ for all $ n_{i}, n_{j}\in \mathbb{N} $.
 Since any $ h \in S $ can be written as $ h = f^{n_{k}} \circ g^{n_{{k-1}}} \circ \ldots \circ g^{n_{1}} $ where $ n_{k}, n_{k-1}, \ldots n_{1}\in \mathbb{N} $ and some $ n_{j} $ are allowed to be zero as well. These facts concluded that $ I(h) = I(f) =I(g) $ for all $ h \in S $. Therefore from the theorem \ref{1c},  $ I(S) = I(f) = I(g) $. 
\end{proof}
There are examples of transcendental entire functions that have same escaping sets. The following criteria is proved by Dinesh Kumar and Sanjay Kumar {\cite[Theorem 2.12]{kum3}}.
\begin{theorem}\label{1c2}
Let $ f $ be a transcendental entire functions of period p and let $ g = f^{k} + p, \; k\in \mathbb{N}$. Then $ I(f) =I(g) $. 
\end{theorem}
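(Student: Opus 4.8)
The plan is to show the two inclusions $I(g) \subseteq I(f)$ and $I(f) \subseteq I(g)$ separately, exploiting the periodicity relation $f(z+p) = f(z)$ throughout. The key algebraic observation I would establish first is a clean formula for the iterates of $g$ in terms of the iterates of $f$. Since $g(z) = f^k(z) + p$ and $f$ is $p$-periodic, we have $f^j(z+p) = f^j(z)$ for every $j \geq 1$; consequently $f^k(g(z)) = f^k(f^k(z)+p) = f^k(f^k(z)) = f^{2k}(z)$, and more generally an induction on $n$ should give
\[
g^n(z) = f^{nk}(z) + p \quad \text{for all } n \in \mathbb{N}.
\]
The inductive step is: $g^{n+1}(z) = g(g^n(z)) = f^k(g^n(z)) + p = f^k(f^{nk}(z) + p) + p = f^k(f^{nk}(z)) + p = f^{(n+1)k}(z) + p$, where the periodicity of $f$ (hence of $f^k$) kills the $+p$ inside. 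This identity is really the whole engine of the proof.

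With that formula in hand, the inclusion $I(g) \subseteq I(f)$ is immediate: if $g^n(z) \to \infty$, then $f^{nk}(z) = g^n(z) - p \to \infty$, so the subsequence $(f^{nk}(z))_n$ of the orbit $(f^m(z))_m$ tends to infinity. To upgrade this to $f^m(z) \to \infty$ along the \emph{full} sequence $m \to \infty$, I would invoke the standard fact from transcendental dynamics that $I(f)$ is completely invariant under $f$ (indeed $f^{-1}(I(f)) = I(f) = f(I(f))$ up to the usual care), equivalently that once infinitely many iterates escape, all of them do; alternatively one argues directly that $z \in I(f^k)$ implies $z \in I(f)$ since $I(f^k) = I(f)$ for transcendental entire $f$. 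For the reverse inclusion $I(f) \subseteq I(g)$: if $f^m(z) \to \infty$ as $m \to \infty$, then in particular $f^{nk}(z) \to \infty$ as $n \to \infty$, hence $g^n(z) = f^{nk}(z) + p \to \infty$, so $z \in I(g)$. Combining both inclusions yields $I(f) = I(g)$.

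The main obstacle I anticipate is the passage between "a subsequence of the $f$-orbit escapes" and "the whole $f$-orbit escapes" — i.e., justifying $I(f^k) = I(f)$ in the transcendental setting. This is true and well known (it follows from the complete invariance of the escaping set, or from the fact that $F(f^k) = F(f)$ and a normal-families argument on wandering behaviour), but it must be cited or briefly argued rather than taken for granted, since it is exactly the place where the periodicity alone does not suffice and one needs genuine function-theoretic input. Everything else is the routine induction above plus bookkeeping with the $+p$ shift, which is harmless precisely because $p$ is a period of $f$.
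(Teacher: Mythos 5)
Your proposal is correct, and in fact the paper contains no proof of this statement at all: Theorem \ref{1c2} is simply quoted from Kumar and Kumar (their Theorem 2.12 in \cite{kum3}), so your argument effectively supplies the details that the paper omits. The conjugation-free identity $g^{n}(z)=f^{nk}(z)+p$, proved by induction using $f^{j}(w+p)=f^{j}(w)$, is exactly the right engine, and it immediately gives $I(g)=I(f^{k})$; together with $I(f^{k})=I(f)$ this yields the theorem. The one place where your justification should be tightened is the step $I(f^{k})\subseteq I(f)$: complete invariance of $I(f)$ under $f$ (which relates membership of $z$ and $f(z)$ in $I(f)$) does not by itself convert ``the $f^{k}$-orbit escapes'' into ``the full $f$-orbit escapes'', and no normal-families input is needed either. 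The clean elementary argument is a boundedness one: entire functions map bounded sets to bounded sets, so if for some residue $0\le r<k$ the subsequence $f^{nk+r}(z)$ had a bounded subsequence, then applying the fixed map $f^{k-r}$ would keep it bounded, contradicting $f^{(n+1)k}(z)\to\infty$; hence every residue class of the orbit tends to infinity and $z\in I(f)$. With that substitution (or a citation of the standard fact $I(f^{n})=I(f)$), your proof is complete and is presumably the same routine computation underlying the cited source.
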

In {\cite[Lemma 3.3]{sub2}}, we have also proved the following criterion to have same escaping sets of two transcendental entire functions. 
\begin{theorem}\label{1c3}
Let $ f $ and $ g $ be two permutable transcendental entire functions of bounded type. Then $I(f) =I(g) $. 
\end{theorem}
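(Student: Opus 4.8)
The plan is to prove the two inclusions $I(f)\subseteq I(g)$ and $I(g)\subseteq I(f)$ separately; since the hypothesis is symmetric in $f$ and $g$, it suffices to establish one of them, say $I(f)\subseteq I(g)$. First I would record two standard facts. Since $f,g\in\mathscr{B}$, it is known (Eremenko--Lyubich) that $I(f)\subseteq J(f)$ and $I(g)\subseteq J(g)$; combined with $J(h)=\partial I(h)$ for entire $h$ (Eremenko, \cite{ere}) this gives $\overline{I(f)}=J(f)$ and $\overline{I(g)}=J(g)$. Secondly, permutability should force $J(f)=J(g)$: if $z\in F(f)$ then $\{f^{n}\}$ is normal near $z$, and a Montel/normality argument using $f^{n}\circ g=g\circ f^{n}$ and the openness of $g$ gives $g(F(f))\subseteq F(f)$; together with the symmetric inclusion and the characterisation of the Julia set as the smallest closed completely invariant set with at least three points one obtains $J(f)=J(g)=:J$ (this step may instead be quoted from the literature on permutable entire functions).

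Next I would dispose of the ``easy half'' of the $g$-invariance of $I(f)$. Suppose $g(z)\in I(f)$; then $f^{n}(g(z))=g(f^{n}(z))\to\infty$, and if $\{f^{n}(z)\}$ had a bounded subsequence converging to a finite point $a$, then $g(f^{n}(z))\to g(a)$ along it, a contradiction; hence $f^{n}(z)\to\infty$, i.e. $g^{-1}(I(f))\subseteq I(f)$. Applying $g$, and using that a transcendental entire function omits at most one value (Picard), we get $I(f)\subseteq g(I(f))$ up to at most a single point, so each point of $I(f)$ possesses a full backward $g$-orbit inside $I(f)$.

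The core of the proof is the forward direction: for $z\in I(f)$ I must show $g^{n}(z)\to\infty$, i.e. that $I(f)$ is forward invariant under $g$, equivalently that $g$ sends the escaping $f$-orbit $w_{n}:=f^{n}(z)$ to another escaping $f$-orbit $g(w_{n})=f^{n}(g(z))$. The natural strategy is to work in the logarithmic tracts: fixing $R_{f}>0$ with $SV(f)\subseteq\{|w|<R_{f}\}$, from $w_{n}\to\infty$ one gets $|f(w_{n})|=|w_{n+1}|>R_{f}$ eventually, so the tail of $\{w_{n}\}$ lies in the tracts of $f$, on each of which $f$ is a universal covering of $\{|w|>R_{f}\}$ and is expanding after the Eremenko--Lyubich logarithmic change of variable; this pins down how fast $w_{n}\to\infty$, and one must transport this to the $g$-side via $g\circ f=f\circ g$, $J(f)=J(g)$, and the analogous tract structure of the bounded-type function $g$, to conclude $g(w_{n})\to\infty$. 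Once that is known, iterating gives $g^{m}(w_{n})\to\infty$ as $m\to\infty$ for each large $n$; since $g^{m}(w_{n})=f^{n}(g^{m}(z))$, no subsequence of $\{g^{m}(z)\}_{m}$ can stay bounded (else applying $f^{n}$ contradicts the above), so $g^{m}(z)\to\infty$ and $z\in I(g)$. Swapping $f$ and $g$ gives the reverse inclusion, hence $I(f)=I(g)$.

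I expect the genuine obstacle to be precisely the assertion that $g$ carries escaping $f$-orbits to escaping $f$-orbits, because the a priori information is essentially circular: saying $g(w_{n})\to\infty$ is saying $g(z)\in I(f)$, which is the conclusion. Breaking the circularity seems to require genuinely using the \emph{joint} structure of the two bounded-type dynamics along the tracts, together with permutability --- and it is \emph{not} enough to know merely that $w_{n}\to\infty$ with $w_{n}\in J(g)$ (for $g(z)=e^{z}$ one has $J(g)=\mathbb{C}$ while $|g|$ is small on a large part of $\{|z|>R\}$, so escape is not automatic along arbitrary sequences to infinity). Everything else in the argument --- the two inclusions being symmetric, the easy (backward) half of the $g$-invariance of $I(f)$, and the soft consequences of $I(\cdot)\subseteq J(\cdot)$ for bounded-type maps --- is routine.
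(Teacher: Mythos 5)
There is a genuine gap, and in fact two. First, the step you yourself identify as the crux --- that escape under $f$ can be transferred to the $g$-side --- is never proved: you offer only the expectation that the Eremenko--Lyubich logarithmic change of variable and the tract structure of $f$ and $g$, combined with permutability, will ``transport'' the escape of $w_{n}=f^{n}(z)$ into the desired conclusion, without giving an actual argument. Since the present paper also gives no proof (it quotes the statement from \cite[Lemma 3.3]{sub2}), I judge the proposal on its own terms, and on its own terms the theorem is left unproved: what you have written is a plan whose decisive step is missing. (Your auxiliary claim that permutability forces $J(f)=J(g)$ by a Montel argument is likewise not routine for transcendental entire functions --- in general this is an open problem, and the bounded-type case needs nontrivial results of Bergweiler--Hinkkanen type resting on $I\subset J$ for class $\mathscr{B}$ --- but since you never use $J(f)=J(g)$ in an essential way, this is secondary.)

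Second, even granting your core claim, the surrounding reduction does not close. Your core claim is forward invariance of $I(f)$ under $g$: for $z\in I(f)$ one gets $g(z)\in I(f)$, and by induction, for each \emph{fixed} $m$, $g^{m}(w_{n})=f^{n}(g^{m}(z))\to\infty$ as $n\to\infty$. From this you assert that ``iterating gives $g^{m}(w_{n})\to\infty$ as $m\to\infty$ for each large $n$''; this interchanges the two limits. Escape in $m$ for fixed $n$ is precisely the statement $w_{n}\in I(g)$, i.e. (up to your correct final ``bounded subsequence'' trick) the statement $z\in I(g)$ that you are trying to prove, and it does not follow from escape in $n$ for each fixed $m$. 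Note that complete invariance of $I(f)$ under $g$ is a priori compatible with $g$ having, say, a periodic point lying in $I(f)$, in which case your scheme yields nothing; excluding such behaviour is exactly where new input about the joint dynamics is required. The parts of your argument that are correct --- the symmetry reduction, the backward half $g^{-1}(I(f))\subset I(f)$, and the observation that $w_{n}\in I(g)$ for a single $n$ would force $z\in I(g)$ --- are the easy parts; the theorem itself remains unestablished by the proposal.
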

If semigroup $ S $ is generated by two transcendental entire functions as mentioned in the theorems \ref{1c2} and \ref{1c3}, then by theorem \ref{1c1}, we have $ I(S) = I(f) = I(g) $.

 \section{Invariant features of escaping set}

We start our study from  invariant property  which is considered a very basic and fundamental structure of escaping set $ I(S) $ of transcendental semigroup $ S $. 
\begin{dfn}[\textbf{Forward, backward and completely invariant set}]
For a given semigroup $S$, a set $U\subset \mathbb{C}$ is said to be $ S $-\textit{forward invariant}\index{forward ! invariant set} if $f(U)\subset U$ for all $f\in S$. It is said to be $ S $-\textit{backward invariant}\index{backward ! invariant set} if $f^{-1}(U)\subset U$ for all $ f\in S $ and it is called $ S $-\textit{completely invariant}\index{completely invariant ! set} if it is both forward and backward invariant.
\end{dfn}
Already Dinesh Kumar and Sanjay Kumar {\cite[Theorem 4.1]{kum2}} and recently we {\cite[Theorem 2.3]{sub2}} prove the following result. Note that our proof is based on the definition \ref{2ab} of escaping set. 
\begin{theorem}\label{fi}
The escaping set $ I(S) $ of transcendental semigroup $ S $ is S-forward invariant.
\end{theorem}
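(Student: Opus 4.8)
The plan is to prove $f(I(S))\subseteq I(S)$ for every $f\in S$ directly from Definition \ref{2ab}. So fix $z\in I(S)$ and $f\in S$; I must show that $g^{n}(f(z))\to\infty$ as $n\to\infty$ for \emph{every} $g\in S$, which is precisely the statement $f(z)\in I(S)$.

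The engine of the argument is a cyclic-rotation principle for escaping sets of compositions: \emph{if $\phi,\psi$ are entire and $w\in I(\phi\circ\psi)$, then $\psi(w)\in I(\psi\circ\phi)$}. I would prove it by putting $a_{m}=\psi\big((\phi\circ\psi)^{m}(w)\big)$, so that $(\phi\circ\psi)^{m+1}(w)=\phi(a_{m})$ while at the same time $a_{m}=(\psi\circ\phi)^{m}(\psi(w))$, using $\psi\circ(\phi\circ\psi)^{m}=(\psi\circ\phi)^{m}\circ\psi$. From $w\in I(\phi\circ\psi)$ we get $\phi(a_{m})\to\infty$; since $\phi$, being entire, carries bounded sets to bounded sets, $(a_{m})$ can have no bounded subsequence, so $a_{m}\to\infty$, i.e. $\psi(w)\in I(\psi\circ\phi)$.

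Now, for any $g\in S$ we have $g\circ f\in S$, hence $z\in I(S)\subseteq I(g\circ f)$ by Theorem \ref{1c}, and the rotation principle with $\phi=g$, $\psi=f$ yields $f(z)\in I(f\circ g)$. Moreover $f(z)\in I(f)$, because $z\in I(f)$ by Theorem \ref{1c} while $I(f)$ is forward invariant under $f$ (classically $f^{n}(z)\to\infty$ forces $f^{n+1}(z)\to\infty$). Thus $f(z)\in I(h)$ for every $h\in S$ of the form $h=f$ or $h=f\circ g$ with $g\in S$; what still has to be secured is $f(z)\in I(g)$ for the remaining $g\in S$, and I expect that to be the crux.

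The obstacle is precisely this: showing $g^{n}(f(z))\to\infty$ for a $g$ with no algebraic relation to $f$, equivalently controlling the pure $g$-orbit of the point $f(z)$. The rotation principle merely permutes the factors of a composition of fixed length and never shortens it, so it cannot by itself reach such a $g$; bridging this requires a finer argument than pure algebra --- for example, a compactness/normality argument that exploits the closure property $g^{n}\circ f\in S$ for all $n$ (assuming a bounded subsequence $g^{n_{k}}(f(z))$ and contradicting $z\in I(g^{n_{k}}\circ f)$). Everything else is routine bookkeeping.
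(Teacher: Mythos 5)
Your proposal is not a complete proof: you yourself flag that the case $f(z)\in I(g)$ for a generator $g$ unrelated to $f$ is ``the crux,'' and that case is exactly the content of Theorem \ref{fi} under Definition \ref{2ab}, since $I(S)=\{z: h^{n}(z)\to\infty \text{ for all } h\in S\}$. What you do establish is correct but is the easy part: the cyclic-rotation lemma ($w\in I(\phi\circ\psi)\Rightarrow\psi(w)\in I(\psi\circ\phi)$, proved exactly as you indicate, using that an entire map sends bounded sets to bounded sets) gives $f(z)\in I(f\circ g)$ for every $g\in S$, and classical invariance gives $f(z)\in I(f)$; but elements such as $g$ itself, $g\circ f$, $g^{2}\circ f$, etc.\ are untouched, and no purely algebraic manipulation of words can reach them, as you correctly observe.

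The bridge you sketch does not close the gap. Assuming a bounded subsequence $g^{n_{k}}(f(z))$ and ``contradicting $z\in I(g^{n_{k}}\circ f)$'' proves nothing: membership of $z$ in $I(g^{n_{k}}\circ f)$ only asserts $(g^{n_{k}}\circ f)^{m}(z)\to\infty$ as $m\to\infty$ for each \emph{fixed} $k$, and is perfectly compatible with the first iterate $(g^{n_{k}}\circ f)(z)=g^{n_{k}}(f(z))$ remaining bounded as $k\to\infty$. An argument of that shape does work verbatim for the sequence-based definition of $I(S)$ used by Kumar and Kumar (given a sequence $(g_{n})\subset S$, apply the hypothesis to $(g_{n}\circ f)$ and extract a divergent subsequence), which is how the source cited for Theorem \ref{fi} proceeds; but the present paper's Definition \ref{2ab} demands divergence of the full orbit $g^{n}(f(z))$ for every fixed $g\in S$, and your proposal never establishes this. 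Note also that this paper does not reprove the theorem but refers to \cite{kum2} and \cite{sub2}; in any case, what you have written, by your own admission, stops short of a proof, and the suggested compactness/normality repair is invalid as stated.
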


It is well-known from \cite{ber1, ere,  hou, mor} that the classical  escaping set of transcendental entire function $f$ is completely invariant. 

There are several classes of transcendental semigroups from which we get  backward invariant escaping sets. In {\cite[Theorem 2.1]{kum1}} Dinesh Kumar and Sanjay Kumar and in {\cite[Theorem 2.6]{sub2}} we prove the following result. 

\begin{theorem}\label{1d}
The escaping set $ I(S) $ of transcendental semigroup $ S $ is S-backward invariant if  $ S $ is an abelian transcendental semigroup. 
\end{theorem}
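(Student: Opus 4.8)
The plan is to verify directly that $f^{-1}(I(S)) \subset I(S)$ for every $f \in S$, which is exactly what $S$-backward invariance means. I would fix $f \in S$ and a point $z$ with $f(z) \in I(S)$, and aim to show $z \in I(S)$, i.e. that $g^{n}(z) \to \infty$ as $n \to \infty$ for every $g \in S$. So I also fix an arbitrary $g \in S$.

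The crux is commutativity. Since $S$ is abelian, any two elements of $S$ commute: writing $f$ and $g$ as finite compositions of generators and using that the generators commute pairwise, one may interchange factors freely, so $f \circ g = g \circ f$ and hence $f \circ g^{n} = g^{n} \circ f$ for all $n \in \mathbb{N}$. Evaluating at $z$ gives $f(g^{n}(z)) = g^{n}(f(z))$. Because $f(z) \in I(S)$ and $g \in S$, the definition of $I(S)$ gives $g^{n}(f(z)) \to \infty$, and therefore $f(g^{n}(z)) \to \infty$ as $n \to \infty$.

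It then remains to deduce $g^{n}(z) \to \infty$ from $f(g^{n}(z)) \to \infty$. This uses only that $f$ is continuous and hence maps bounded sets to bounded sets: if $(g^{n}(z))_{n}$ failed to tend to $\infty$, it would admit a bounded subsequence, whose $f$-image would be bounded, contradicting $f(g^{n}(z)) \to \infty$. Hence $g^{n}(z) \to \infty$; as $g$ was arbitrary, $z \in I(S)$, so $f^{-1}(I(S)) \subset I(S)$, and as $f$ was arbitrary, $I(S)$ is $S$-backward invariant. I do not expect a serious obstacle; the one point needing care is the propagation of commutativity from the generators to arbitrary elements of $S$, so that $f$ commutes with every power $g^{n}$. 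This is precisely what converts the single hypothesis $f(z) \in I(S)$ into control of the whole orbit of $z$, and it is where abelianness is essential — without it the manipulation $f(g^{n}(z)) = g^{n}(f(z))$ is unavailable, consistent with the fact that $I(S)$ need not be backward invariant for non-abelian $S$.
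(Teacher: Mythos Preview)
Your argument is correct and is precisely the standard route to this result. Note, however, that the present paper does not actually supply a proof of this theorem; it only cites \cite[Theorem 2.1]{kum1} and \cite[Theorem 2.6]{sub2}, where the proof proceeds along the same lines you describe: commutativity yields $f(g^{n}(z)) = g^{n}(f(z)) \to \infty$, and the fact that an entire function sends bounded sets to bounded sets then forces $g^{n}(z) \to \infty$.
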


With the result of this theorem \ref{1d}, we can conclude that  an escaping set $ I(S) $ is S-completely invariant if $ S $ is an abelian transcendental semigroup.  For example, the following semigroups 
\begin{itemize}
\item $\langle z +\gamma \sin z, \; z +\gamma \sin z + 2k \pi \rangle$,  
\item $\langle z +\gamma \sin z, \; -z -\gamma \sin z + 2k \pi \rangle$,
\item $\langle z + \gamma e^{z}, \; z + \gamma e^{z} + 2k \pi i \rangle$, 
\item  $\langle z - \sin z, \; z - \sin z + 2\pi \rangle$, 

\end{itemize} 
are abelian transcendental  semigroups, so their escaping sets are S-completely invariant. Note that it will not better to  conclude that escaping set can not be completely invariant unless the semigroup is abelian.

Eremenko's \cite{ere} result $\partial I(f) = J(f)$ of classical transcendental dynamics can be generalized to semigroup settings. Dinesh Kumar and Sanjay Kumar {\cite[Lemma 4.2 and Theorem 4.3]{kum2}} proved the following result.  
\begin{theorem}\label{3}
Let $S$ be a transcendental entire semigroup. Then
\begin{enumerate}
\item $int(I(S))\subset F(S)\;\ \text{and}\;\ ext(I(S))\subset F(S) $, where $int$ and $ext$ respectively denote the interior and exterior of $I(S)$.  
 \item $\partial I(S) = J(S)$, where $\partial I(S)$ denotes the boundary of $I(S)$. 
\end{enumerate}
\end{theorem}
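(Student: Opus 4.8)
My plan is to prove part (1) first and then deduce part (2), so I begin with the inclusion $int(I(S))\subset F(S)$. Let $z_0\in int(I(S))$ and choose an open ball $B$ with $z_0\in B\subset I(S)$. By Theorem \ref{fi}, $I(S)$ is $S$-forward invariant, so $g(B)\subset g(I(S))\subset I(S)$ for every $g\in S$. Fix a generator $f$ of $S$; by Theorem \ref{1c} we have $I(S)\subset I(f)$, and $\mathbb{C}\setminus I(f)$ contains every periodic point of $f$ and is therefore infinite. Hence there are two distinct points $a,b\notin I(S)$, and since $g(B)\subset I(S)\subset\mathbb{C}\setminus\{a,b\}$ for all $g\in S$, Montel's theorem shows that the family $S$ is normal on $B$. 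Thus $B\subset F(S)$ and $z_0\in F(S)$.

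The inclusion $ext(I(S))\subset F(S)$ is the delicate one, and this is the step I expect to be the main obstacle. It is equivalent to $J(S)\subset\overline{I(S)}$, i.e.\ to the semigroup analogue of Eremenko's theorem $J(f)\subset\overline{I(f)}$. In the cyclic case one uses the blowing-up property: if $z_0\in J(f)$ and $V$ is a neighborhood of $z_0$, then $\bigcup_{n}f^{n}(V)$ omits at most one value, so it meets the infinite set $I(f)$; pulling an escaping point back into $V$ and invoking the \emph{backward} invariance of $I(f)$ places an escaping point in $V$. In the semigroup setting the obstruction is exactly that $I(S)$ is in general only forward invariant and need not be backward invariant — backward invariance does hold when $S$ is abelian, by Theorem \ref{1d} — so this pull-back step is not directly available. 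I would instead combine the blowing-up property of $J(S)$ (for $z_0\in J(S)$ and any neighborhood $V$, $\bigcup_{g\in S}g(V)$ omits at most one point) with the forward invariance of $I(S)$ to produce escaping points arbitrarily close to $z_0$; this is also where a hypothesis of the form $I(S)\neq\emptyset$ is genuinely needed, since for the semigroup of Theorem \ref{e} one has $I(S)=\emptyset$ while $J(S)\supset J(f_1)\neq\emptyset$, and then the asserted equality in (2) cannot hold.

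It remains to treat part (2) assuming (1). Since $int(I(S))\cup ext(I(S))\subset F(S)$ and these two open sets together with $\partial I(S)$ cover $\mathbb{C}$, we get $J(S)=\mathbb{C}\setminus F(S)\subset\partial I(S)$. For the reverse inclusion, suppose for contradiction that $z_0\in\partial I(S)\cap F(S)$ and let $U$ be a connected open neighborhood of $z_0$ with $U\subset F(S)$. As $z_0\in\overline{I(S)}$, the set $U$ contains a point $z_1\in I(S)$. Fix any $g\in S$; then $\{g^{n}:n\in\mathbb{N}\}\subset S$ is normal on the connected set $U$, and $g^{n}(z_1)\to\infty$ because $z_1\in I(S)\subset I(g)$. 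Any locally uniform subsequential limit of $(g^{n})$ on $U$ is either holomorphic or identically $\infty$; since it equals $\infty$ at $z_1$ it must be identically $\infty$, whence $g^{n}\to\infty$ locally uniformly on $U$, i.e.\ $U\subset I(g)$. Since $g\in S$ was arbitrary and a point belongs to $I(S)$ precisely when it escapes under every element of $S$, we conclude $U\subset I(S)$, so $z_0\in int(I(S))$, contradicting $z_0\in\partial I(S)$. Hence $\partial I(S)\cap F(S)=\emptyset$, that is $\partial I(S)\subset J(S)$, and combined with the inclusion $J(S)\subset\partial I(S)$ this gives $\partial I(S)=J(S)$.
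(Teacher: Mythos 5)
Your argument for $int(I(S))\subset F(S)$ is correct (Montel's theorem plus the fact that $\mathbb{C}\setminus I(f)$ contains infinitely many non-escaping points, e.g. periodic points), and your derivation of part (2) — $J(S)\subset\partial I(S)$ from part (1), and $\partial I(S)\subset J(S)$ via the locally uniform divergence of $(g^{n})$ on a connected Fatou neighborhood meeting $I(S)$ — is sound, given that membership in $I(S)$ means escaping under every element of $S$. But the proposal does not actually prove the second inclusion $ext(I(S))\subset F(S)$, equivalently $J(S)\subset\overline{I(S)}$, and you acknowledge this yourself. The sketch you offer (blowing-up property of $J(S)$ combined with forward invariance of $I(S)$) cannot close the gap: blowing up produces some $g\in S$ and $z\in V$ with $g(z)\in I(S)$, and forward invariance then only gives information about the forward orbit of $g(z)$, not about $z$. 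What the pull-back step needs is precisely backward invariance ($g(z)\in I(S)\Rightarrow z\in I(S)$), which is exactly what fails for general non-abelian semigroups; knowing $g(z)\in I(S)$ only shows that $z$ escapes under every map of the form $h\circ g$ with $h\in S$, which is not all of $S$. So part (1) is only half proved, and with it the inclusion $J(S)\subset\partial I(S)$ in part (2).

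For comparison, the paper itself contains no proof of this statement: Theorem \ref{3} is quoted from Kumar and Kumar \cite{kum2}, whose definition of the escaping set of a semigroup differs from Definition \ref{2ab}, so there is no in-paper argument to measure yours against. Your observation that the statement is false as it stands under the present definition when $I(S)=\emptyset$ — for the semigroup of Theorem \ref{e} one has $ext(I(S))=\mathbb{C}\not\subset F(S)$ and $\partial I(S)=\emptyset\neq J(S)\supset J(f_{1})$ — is correct and worth making explicit: it shows that no proof can exist without an added hypothesis such as $I(S)\neq\emptyset$, and that even then a complete argument for $J(S)\subset\overline{I(S)}$ requires some substitute for backward invariance (for instance working with $\overline{I(S)}$ as in Theorem \ref{ci11}, or restricting to the classes where Theorem \ref{1d} applies), which your proposal does not supply.
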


Eremenko and Lyubich \cite{ere1} proved that if transcendental function $ f\in \mathscr{B} $, then $ I(f)\subset J(f) $, and $ J(f) = \overline{I(f)} $. Dinesh Kumar and Sanjay Kumar {\cite [Theorem 4.5]{kum2}} generalized these results to a finitely generated transcendental semigroup of bounded type  as shown below.
\begin{theorem}\label{4}
For every finitely generated transcendental semigroup $ S= \langle f_1, \\ f_2,  \ldots,f_n\rangle $ in which each generator $f_i $ is of bounded type, then $ I(S)\subset J(S) $ and $ J(S) = \overline{I(S)} $. 
\end{theorem}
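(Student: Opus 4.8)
The plan is to derive both assertions by assembling results already recorded above: the theorem of Eremenko and Lyubich \cite{ere1} for functions of bounded type, Theorem~\ref{1c}, and Theorem~\ref{3}. No new technique is required; the content of the statement is really that bounded type forces the escaping set to sit inside the Julia set, after which the second equality is almost formal.

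\emph{Step 1: proving $I(S)\subset J(S)$.} Fix one generator, say $f_{1}$. Since $f_{1}\in S$, Theorem~\ref{1c} gives $I(S)\subset I(f_{1})$. Since $f_{1}\in\mathscr{B}$, the Eremenko--Lyubich theorem gives $I(f_{1})\subset J(f_{1})$. Finally I would record the elementary inclusion $J(f_{1})\subset J(S)$: if $S$ is normal in a neighbourhood of a point $z$, then so is its subfamily $\{f_{1}^{n}:n\in\mathbb{N}\}$, hence $z\in F(f_{1})$; thus $F(S)\subset F(f_{1})$, and taking complements yields $J(f_{1})\subset J(S)$. Concatenating the three inclusions, $I(S)\subset I(f_{1})\subset J(f_{1})\subset J(S)$. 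Note that a single generator of bounded type already suffices for this inclusion, so the full hypothesis is used only to keep the statement symmetric (and to match the formulation of \cite{kum2}).

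\emph{Step 2: proving $J(S)=\overline{I(S)}$.} Since $J(S)$ is closed, Step~1 gives $\overline{I(S)}\subset J(S)$. Conversely, Theorem~\ref{3}(2) states $J(S)=\partial I(S)$, and $\partial I(S)\subset\overline{I(S)}$ trivially; hence $J(S)\subset\overline{I(S)}$. The two inclusions combine to the asserted equality.

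Since the argument only reshuffles facts established earlier, there is no deep obstacle. The two points that deserve a line of care are the inclusion $J(f_{1})\subset J(S)$ (an immediate consequence of the definition of $F(S)$ via normal families), and the tacit convention that the identity $J(S)=\overline{I(S)}$ is read in the non-degenerate case $I(S)\neq\emptyset$ — the same standing assumption already implicit in Theorem~\ref{3}, and satisfied for instance whenever $S$ is generated by permutable transcendental entire functions of bounded type, by Theorems~\ref{1c1} and~\ref{1c3}.
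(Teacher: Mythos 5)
Your argument is correct, but note that the paper itself does not prove Theorem~\ref{4} at all: it is quoted verbatim from Kumar and Kumar \cite[Theorem 4.5]{kum2}, so there is no in-paper proof to compare against. Your derivation supplies exactly the assembly one would expect (and very likely the one used in the cited source): $I(S)\subset I(f_{1})\subset J(f_{1})\subset J(S)$, where the first inclusion is Theorem~\ref{1c}, the second is Eremenko--Lyubich for $f_{1}\in\mathscr{B}$, and the third follows because normality of the full family $S$ at a point forces normality of the subfamily $\{f_{1}^{n}\}$, i.e.\ $F(S)\subset F(f_{1})$; then $J(S)=\partial I(S)\subset\overline{I(S)}$ from Theorem~\ref{3}(2) together with closedness of $J(S)$ gives the second assertion. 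Each link checks out, and your two side remarks are accurate and worth keeping: the inclusion $I(S)\subset J(S)$ really only needs one generator of bounded type, and the equality $J(S)=\overline{I(S)}$ (like Theorem~\ref{3}(2) itself, which is also only cited here) is meaningful only under the standing non-degeneracy assumption $I(S)\neq\emptyset$, since Theorem~\ref{e} shows $I(S)$ can be empty while $J(S)\supset J(f_{1})$ never is. The only caveat is that your Step 2 inherits whatever hypotheses the cited Theorem~\ref{3}(2) carries in \cite{kum2}, so your proof is a reduction to that result rather than a fully independent argument; within the framework of results the paper takes as given, it is complete.
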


In contrary of the result of the theorem \ref{1d}, the closure of escaping set $ I(S) $, that is,  $ \overline{I(S)} $ and the Julia set $ J(S) $ are S-completely invariant even though the semigroup $ S $ is not abelian. Dinesh Kumar and Sanjay kumar {\cite[Theorem 2.4 and Corollary 2.5]{kum1}} proved the following result.

\begin{theorem}\label{ci11}
The closure of the escaping set $ I(S) $ and Julia set $ J(S) $ are S-completely invariant if a semigroup $ S $ is finitely generated and each of its generators is of bounded type. 
\end{theorem}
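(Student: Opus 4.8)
The plan is to collapse the two sets into one. Since $S=\langle f_1,\dots,f_n\rangle$ is finitely generated with every generator of bounded type, Theorem \ref{4} gives $J(S)=\overline{I(S)}$; so it suffices to show that $\overline{I(S)}$ is $S$-completely invariant, and the assertion for $J(S)$ then follows at once.

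For $S$-forward invariance: by Theorem \ref{fi} the set $I(S)$ is $S$-forward invariant, so for each $f\in S$ the continuity of $f$ gives $f(\overline{I(S)})\subseteq\overline{f(I(S))}\subseteq\overline{I(S)}$; this step uses nothing beyond Theorem \ref{fi}. For $S$-backward invariance: because $\overline{I(S)}=J(S)$, we must show $f^{-1}(J(S))\subseteq J(S)$ for every $f\in S$, which, on passing to complements, is the same as the forward invariance of the Fatou set, $f(F(S))\subseteq F(S)$ for every $f\in S$. I would establish this by a standard normality argument: fix $z_0\in F(S)$ and $f\in S$, set $w_0=f(z_0)$, and choose $r>0$ with $S$ normal on $D(z_0,r)$. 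Given any sequence $(h_k)$ in $S$, the sequence $(h_k\circ f)$ also lies in $S$ (as $S$ is closed under composition) and is therefore normal on $D(z_0,r)$, so some subsequence $h_{k_j}\circ f$ converges locally uniformly on $D(z_0,r)$, either to a holomorphic function or to $\infty$. Since $f$ is non-constant entire, $K:=f(\overline{D(z_0,r/2)})$ is compact and, by the open mapping theorem, its interior contains the neighbourhood $f(D(z_0,r/2))$ of $w_0$; transferring the local bound (respectively the escape to $\infty$) of $h_{k_j}\circ f$ on $\overline{D(z_0,r/2)}$ to $h_{k_j}$ on $K$, and applying Montel's theorem (respectively the definition of convergence to $\infty$), yields a further subsequence of $(h_k)$ converging locally uniformly near $w_0$. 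Hence $w_0\in F(S)$.

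Putting the two invariances together shows $\overline{I(S)}$ is $S$-completely invariant, and therefore so is $J(S)=\overline{I(S)}$. The finite-generation and bounded-type hypotheses are used only to obtain $J(S)=\overline{I(S)}$ from Theorem \ref{4}; everything else holds for arbitrary transcendental semigroups. The step I expect to demand the most care is the forward invariance of $F(S)$ — specifically the passage from normality on a disc about $z_0$ to normality about $f(z_0)$, where one must invoke the open mapping theorem and the compactness of the image of a closed disc, and must treat the finite-limit and $\infty$-limit cases separately.
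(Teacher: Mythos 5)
Your argument is correct, but note that the paper itself contains no proof of Theorem~\ref{ci11}: it is quoted from Kumar and Kumar \cite{kum1}, so there is no internal proof to compare against. Your route is the natural one and, as far as the cited source goes, essentially the intended one: use Theorem~\ref{4} to identify $J(S)=\overline{I(S)}$ (this is the only place finite generation and bounded type enter), get forward invariance of $\overline{I(S)}$ from Theorem~\ref{fi} plus continuity, and get backward invariance by proving $f(F(S))\subseteq F(S)$ for every $f\in S$, which is indeed equivalent, on taking complements, to $f^{-1}(J(S))\subseteq J(S)$. Your normality argument for that last step is sound and complete: since $h_k\circ f\in S$, normality of $S$ near $z_0$ yields a subsequence $h_{k_j}\circ f$ converging locally uniformly to a holomorphic limit or to $\infty$ on a disc about $z_0$; the uniform bound (or uniform escape) on $\overline{D(z_0,r/2)}$ transfers to the compact set $K=f(\overline{D(z_0,r/2)})$, whose interior contains the open neighbourhood $f(D(z_0,r/2))$ of $w_0$ by the open mapping theorem, and Montel (respectively the definition of divergence to $\infty$) then gives a convergent subsequence of $(h_{k_j})$ near $w_0$. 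This forward invariance of $F(S)$, equivalently backward invariance of $J(S)$, holds for arbitrary transcendental semigroups (it is the standard Hinkkanen--Martin-type fact, also used in \cite{kum1, sub2}), and your observation that the hypotheses are needed only to invoke Theorem~\ref{4} is accurate. The one case you rightly flag as delicate — treating the finite limit and the $\infty$ limit separately when transferring from $h_{k_j}\circ f$ to $h_{k_j}$ — is handled correctly, so no gap remains.
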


\section{Completely invariant escaping set}

The theorem \ref {1d} is a condition for completely invariant escaping set of transcendental semigroup.  It is indeed a generalization of completely invariant property of classical escaping set of single function to more general settings of semigroup.  In this section, we generalize the classical completely  invariant notion of escaping set of single function to the completely  invariant  notion of escaping set of transcendental semigroup. That is, we make completely invariant escaping set of  semigroup $ S $  in different manner under each element of $ S $ and we show that such type of completely invariant escaping set is same as escaping set $ I(S) $ if and only if $ I(S) $ is S-completely invariant. 
\begin{dfn}\label{eq11}
For a transcendental semigroup $ S $, let us define the completely invariant escaping set of $ S $ 
\begin{align*}
K(S) =\bigcap_{i \in \mathbb{N}}\{E_{i}: E_{i}\; \text{is completely invariant  under each}\;  f \in S \; \text{and each}\; E_{i} \; \\  \text{contains points }  z \in \mathbb{C} \; \text{such that} \; f^{n}(z) \to \infty \;\text{as} \; n \to \infty  \; \text{for every}\; f \in S\}
\end{align*}
\end{dfn}
There are non-trivial transcendental semigroups $ S $ for which completely invariant escaping set  $K(S) $ exists. The following example stated in the theorem \ref{1c1} will be a good source of several other examples.
\begin{exm}\label{ex1}
Suppose that $ S = \langle f, g \rangle $ and $ I(f) = I(g) $. Then $ K = I(S) $.
\end{exm} 
Since $ I(f) $ is completely invariant under $ f $ and $ I(g) $ is completely invariant under $ g $. If $ I(f) = I(g) $, then $ I(h) = I(f) = I(g) = I(S) $ for all $ h \in S $. In this case, $ K(S) = I(S) $. A concrete nice example of this case is a semi group $ S = \langle f, g \rangle $ generated by the functions $ f(z) = e^{\lambda z}, \; \lambda \in \mathbb{C} \setminus \{0\}$ and $ g(z) = f^{k} + \frac{2\pi i}{\lambda}, \; k \in \mathbb{N} $. In this, we can find that $ I(h) = I(f) = I(g) = I(S) $ for all $ h \in S $. Note that the semigroup $ S $ that we mentioned here is not abelian. Another example of same kind is a semigroup $ S = \langle f, g \rangle $ generated by the functions $ f(z) = \lambda \sin z, \; \lambda \in \mathbb{C} \setminus \{0\}$ and $ g(z) = f^{k} + 2 \pi, \; k\in \mathbb{N} $. In this case, we also get $ K(S) = I(S) $.

From this example \ref{ex1}, we can conclude that escaping set $ I(S) $ may be completely invariant even if semigroup $ S $ is not abelian. In such a case, escaping set $ I(S) $ is nothing other than the set $ K(S) $. There are tanscendental semigroups $ S $ for which the set $ I(S) $ as well as the set $ K(S) $ might be empty. For example:
\begin{exm}
Suppose that $ S = \langle f, g \rangle $, where $ f(z) = e^{z} $ and $ g(z) = e^{-z} $. Then both $ K(S)$ and $I(S)$ are empty sets. For $ z \in I(f) $, then $ g(f^{n}(z)) = 1/e^{f^{n}(z)} = 1/f(f^{n}(z)) = 1/f^{n+1}(z) \to 0 $ as $ n \to \infty $. 
\end{exm}

\begin{prop}\label{ci1}
Let $ S $ be a transcendental semigroup. If $ K(S) \neq \emptyset $,  then it is S-completely invariant under each $ f \in S $ and it contains all points $ z \in \mathbb{C} $ with  $f^{n}(z) \to \infty $ as $ n \to \infty $ for every $ f \in S $. 
\end{prop}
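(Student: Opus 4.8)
The plan is to read Definition \ref{eq11} as the assertion that $K(S)=\bigcap_{i\in\mathbb{N}}E_i$, where the $E_i$ range over the family $\mathcal{E}$ of all sets that are completely invariant under every $f\in S$ and that contain $I(S)$ (recall from Definition \ref{2ab} that $I(S)$ is precisely the set of points $z$ with $f^n(z)\to\infty$ for every $f\in S$, so the clause ``contains points $z$ such that $f^n(z)\to\infty$ for every $f\in S$'' is to be read as $E_i\supset I(S)$). The family $\mathcal{E}$ is never empty, since $\mathbb{C}$ itself is completely invariant and contains $I(S)$; the hypothesis $K(S)\neq\emptyset$ only serves to exclude the degenerate case in which the intersection collapses. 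Granting this reading, the proposition reduces to two elementary observations about intersections.

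First I would verify that $K(S)$ contains all points $z$ with $f^n(z)\to\infty$ for every $f\in S$, i.e.\ that $I(S)\subset K(S)$: since $I(S)\subset E_i$ for each $i$, we get $I(S)\subset\bigcap_{i\in\mathbb{N}}E_i=K(S)$ at once.

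Next I would show that $K(S)$ is $S$-completely invariant, using the standard set-theoretic facts that $g\bigl(\bigcap_i A_i\bigr)\subset\bigcap_i g(A_i)$ for any map $g$ and $g^{-1}\bigl(\bigcap_i A_i\bigr)=\bigcap_i g^{-1}(A_i)$. Fix $f\in S$. For forward invariance, $f(K(S))=f\bigl(\bigcap_i E_i\bigr)\subset f(E_j)\subset E_j$ for every $j$, where the last inclusion holds because $E_j$ is forward invariant under $f$; hence $f(K(S))\subset\bigcap_j E_j=K(S)$. For backward invariance, $f^{-1}(K(S))=\bigcap_i f^{-1}(E_i)\subset\bigcap_i E_i=K(S)$, since each $E_i$ is backward invariant under $f$. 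As $f\in S$ was arbitrary, $K(S)$ is both $S$-forward and $S$-backward invariant, hence $S$-completely invariant, completing the argument.

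I do not anticipate any real obstacle: the content is purely formal once Definition \ref{eq11} is parsed correctly. The only matters requiring a word of care are the interpretation of the phrase ``contains points $z$ such that\,\dots'' as ``contains \emph{all} such points'' (without this, $\mathcal{E}$ need not be closed under the operations used), and the remark that $\mathcal{E}\neq\emptyset$ so that $K(S)$ is an intersection over a nonempty family; both points are immediate.
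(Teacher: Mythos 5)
Your argument is correct and follows essentially the same route as the paper: both proofs work directly from Definition \ref{eq11}, using $f\bigl(\bigcap_i E_i\bigr)\subset\bigcap_i f(E_i)\subset\bigcap_i E_i=K(S)$ for forward invariance, the analogous inclusion for preimages, and the observation that the escaping points lie in every $E_i$ and hence in the intersection. Your write-up is merely more explicit than the paper's (which dispatches backward invariance with ``follows similarly'' and the last clause with ``follows from the definition''), and your added remarks on reading the definition as $E_i\supset I(S)$ and on the nonemptiness of the family are sensible clarifications rather than a different method.
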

\begin{proof}
By the definition \ref{eq11}, the set $ K(S) = \bigcap_{i \in \mathbb{N}} E_{i} $,  where $ E_{i} $ is completely invariant for each $ f \in S $ and it contains points $z\in \mathbb{C}  $ such that $ f^{n}(z) \to \infty $ as $ n \to \infty $. For all $ f \in S $, we have $ f(K(S)) = f(\bigcap_{i \in \mathbb{N}} E_{i}) \subseteq \bigcap_{i \in \mathbb{N}}f(E_{i}) \subseteq \bigcap_{i \in \mathbb{N}}E_{i} \subset  E_{i}$ for all $ i \in \mathbb{N} $. This implies that $ f(K(S)) \subset\bigcap_{i \in \mathbb{N}} E_{i} = K(S) $. Backward invariant of $ K(S) $ follows similarly. This proves that $ K(S) $ is S-completely invariant. The next part follows from the definition \ref{eq11}.
\end{proof}

\begin{prop}\label{ci}
The set $ K(S) \subset I(f)$ for each $ f\in S $ and so $ K(S) \subset \bigcap_{f \in S}I(f) $
\end{prop}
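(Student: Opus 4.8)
\emph{Proof proposal.} The plan is to prove the first assertion, $K(S)\subseteq I(f)$ for an arbitrary fixed $f\in S$; the second assertion $K(S)\subseteq\bigcap_{f\in S}I(f)$ then follows immediately by intersecting over $f\in S$. Recall that, by Definition \ref{eq11}, $K(S)=\bigcap_{i\in\mathbb{N}}E_{i}$, where the $E_{i}$ range over \emph{all} subsets of $\mathbb{C}$ that are completely invariant under every element of $S$ and contain every point $z$ with $g^{n}(z)\to\infty$ for all $g\in S$. Hence it suffices to produce a \emph{single} set $E$ meeting these two requirements and satisfying $E\subseteq I(f)$: once this is done, $K(S)\subseteq E\subseteq I(f)$.

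First I would take as candidate $E=\bigcap_{g\in S}I(g)$. By the definition of the escaping set in Definition \ref{2ab}, this set is exactly $\{z\in\mathbb{C}:g^{n}(z)\to\infty\text{ as }n\to\infty\text{ for all }g\in S\}=I(S)$, so it certainly contains all the points required in Definition \ref{eq11}, and trivially $E\subseteq I(f)$ for the chosen $f$. What remains is to check that $E$ is completely invariant under each $g\in S$, so that it genuinely qualifies as one of the sets $E_{i}$. Forward invariance, $g(E)\subseteq E$ for every $g\in S$, is precisely Theorem \ref{fi}, since $E=I(S)$.

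The hard part will be the backward invariance of $E$: given $z\in E$, $h\in S$, and $w$ with $h(w)=z$, one must show $g^{n}(w)\to\infty$ as $n\to\infty$ for every $g\in S$. My intended route is to exploit that $h\circ g\in S$ and $z\in E\subseteq I(h\circ g)$, so that $(h\circ g)^{n}(z)\to\infty$; combining this with the identity $(h\circ g)^{n}\circ h=h\circ(g\circ h)^{n}$ gives $h\bigl((g\circ h)^{n}(w)\bigr)\to\infty$, and since a continuous map cannot carry a bounded sequence to $\infty$ one deduces $(g\circ h)^{n}(w)\to\infty$. The genuinely delicate step is then to pass from these twisted iterates to the plain iterates $g^{n}(w)$; I would either push this comparison through directly, or, where it resists, fall back on the mechanism behind Theorem \ref{1d} (commutation of $g$ and $h$). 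Once $E$ is shown to be $S$-completely invariant, it is an admissible $E_{i}$ in Definition \ref{eq11}, and the inclusion $K(S)\subseteq E\subseteq I(f)$ follows for each $f\in S$, hence $K(S)\subseteq\bigcap_{f\in S}I(f)$. I expect the verification of backward invariance of $\bigcap_{g\in S}I(g)$ to be the only real obstacle.
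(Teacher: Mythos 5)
Your proposal has a genuine gap, and it is not the route the paper takes. Your whole strategy is to exhibit one admissible set $E_i$ contained in $I(f)$, and your only candidate is $E=\bigcap_{g\in S}I(g)=I(S)$; for this to qualify you must prove that $I(S)$ is backward invariant under every element of $S$. But that is exactly what fails (or at least is unavailable) for a general, non-abelian semigroup: the paper's Theorem \ref{1d} gives backward invariance of $I(S)$ only when $S$ is abelian, and the later proposition states that $K(S)=I(S)$ precisely when $I(S)$ is $S$-completely invariant, so completing your argument would amount to proving a strictly stronger statement than Proposition \ref{ci} itself. Concretely, the step you flag as ``delicate'' is where the argument breaks: from $z\in I(h\circ g)$ and $(h\circ g)^n\circ h=h\circ(g\circ h)^n$ you can legitimately conclude $(g\circ h)^n(w)\to\infty$ (a nonconstant entire map sends bounded sequences to bounded sequences), but there is no way in general to pass from the twisted iterates $(g\circ h)^n(w)$ to the plain iterates $g^n(w)$; the fallback on ``the mechanism behind Theorem \ref{1d}'' presupposes commutation of $g$ and $h$, which the proposition does not assume. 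So the obstacle you yourself identify is not a technicality to be smoothed over later --- it is the reason this approach cannot work as stated.

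The mismatch comes from how you read Definition \ref{eq11}. You take the $E_i$ to be completely invariant sets that \emph{contain} all common escaping points, so that $K(S)$ is the smallest completely invariant set containing $I(S)$; under that reading the inclusion $K(S)\subset\bigcap_{f\in S}I(f)$ is equivalent to complete invariance of $I(S)$ and is not provable in general. The paper instead reads the definition so that membership of $z$ in $K(S)$ already entails $f^{n}(z)\to\infty$ for every $f\in S$: its proof is the one-liner that $z\in K(S)$ gives $z\in I(S)$ directly from Definition \ref{eq11}, and then $I(S)\subset I(f)$ for each $f\in S$ by Theorem \ref{1c}, with the intersection statement following trivially. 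Under that reading no invariance property of $I(S)$ is needed at all. (The definition is admittedly ambiguous, but the paper's own use of it in Proposition \ref{ci} is the second reading, and your proof cannot be repaired within the first one.)
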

\begin{proof}
Let $ z \in K(S) $. Then by the definition \ref{eq11},   $ z \in E_{i} $ for all $ i $ and $f^{n}(z) \to \infty $ as $ n \to \infty $ for every $ f \in S $. This proves that $ z \in I(S) $. From the theorem \ref{1c}, we have $ I(S) \subset I(f) $ for all $ f \in S $. Thus, we must have $ K(S) \subset I(f)$ for all $ f \in S $. The last inclusion is obvious from the assertion $ K(S) \subset I(f)$ for all $ f \in S $. 
\end{proof}

The fact of this proposition \ref{ci} together with fact $ I(S) \subset I(f) $ for all $ f \in S $, we can say that $ K(S) $ is completely invariant escaping set which is contained in $ I(S) $. The following result shows that  $ K(S) $ sometime may be equal to $ I(S) $.  
\begin{prop}
Let $ S $ be a non-trivial transcendental semigroup. Then $ K(S)  \\ = I(S) $ if and only if $ I(S) $ is S-completely invariant.
\end{prop}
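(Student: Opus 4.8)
The plan is to prove the two implications separately, in each case unwinding Definition \ref{eq11} while keeping in mind that, by Definition \ref{2ab}, the set $I(S)$ is \emph{exactly} the set of points $z$ with $f^{n}(z)\to\infty$ as $n\to\infty$ for every $f\in S$. The first thing I would record is the inclusion $I(S)\subseteq K(S)$, which holds unconditionally: every set $E_{i}$ admitted into the defining family of $K(S)$ is required to contain all points $z$ with $f^{n}(z)\to\infty$ for all $f\in S$, i.e.\ to contain $I(S)$; hence $I(S)\subseteq\bigcap_{i}E_{i}=K(S)$. (The family is non-empty, since $\mathbb{C}$ itself is completely invariant under every $f\in S$ and contains all escaping points, so $K(S)$ is well-defined.)

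For the ``if'' direction, assume $I(S)$ is S-completely invariant. Then $I(S)$ satisfies both requirements for membership in the family of Definition \ref{eq11}: it is completely invariant under each $f\in S$ by hypothesis, and it contains every escaping point because it \emph{is} the set of escaping points. Thus $I(S)$ is one of the admissible sets $E_{i}$, and therefore $K(S)=\bigcap_{i}E_{i}\subseteq I(S)$. Combining this with the inclusion $I(S)\subseteq K(S)$ of the previous paragraph yields $K(S)=I(S)$. The degenerate case $I(S)=\emptyset$ is handled by the same argument, since $\emptyset$ is trivially S-completely invariant and contains ``all'' escaping points, forcing $K(S)=\emptyset$.

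For the ``only if'' direction, assume $K(S)=I(S)$. If this common set is empty there is nothing to prove, as $\emptyset$ is trivially S-completely invariant. Otherwise $K(S)\neq\emptyset$, and Proposition \ref{ci1} gives that $K(S)$ is S-completely invariant; since $K(S)=I(S)$, so is $I(S)$. I expect no genuine difficulty here: the only point requiring care is the precise reading of Definition \ref{eq11} — in particular that ``each $E_{i}$ contains points $z$ with $f^{n}(z)\to\infty$ for every $f\in S$'' is to be understood as $E_{i}\supseteq I(S)$ — together with remembering to dispose of the trivial case $I(S)=\emptyset$; beyond that, no dynamical input is needed other than Proposition \ref{ci1}.
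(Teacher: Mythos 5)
Your proof is correct and reaches the same two inclusions as the paper, but the mechanism for the harder half is genuinely different. For the direction ``$I(S)$ S-completely invariant $\Rightarrow K(S)=I(S)$'', the paper obtains $K(S)\subseteq I(S)$ by asserting that $K(S)$ ``consists of'' points escaping under every $f\in S$ (the same reading of Definition \ref{eq11} used in Proposition \ref{ci}), so for the paper this inclusion holds unconditionally and the hypothesis is only needed for $I(S)\subseteq K(S)$. You instead read Definition \ref{eq11} as requiring each admissible $E_{i}$ to \emph{contain} $I(S)$, which gives $I(S)\subseteq K(S)$ unconditionally, and you then use the hypothesis to exhibit $I(S)$ itself as a member of the defining family, forcing $K(S)=\bigcap_i E_i\subseteq I(S)$. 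This witness argument buys you something real: it does not depend on the claim that every point of $K(S)$ escapes, a claim which under your (quite natural) reading of the definition is not automatic, since an intersection of sets each containing $I(S)$ could a priori contain non-escaping points; the paper's proof silently switches between the two readings of the definition, while yours is internally consistent. You also supply details the paper omits — that the defining family is non-empty (take $E=\mathbb{C}$), so $K(S)$ is well-defined, and the disposal of the case $I(S)=\emptyset$, where Proposition \ref{ci1} (stated only for $K(S)\neq\emptyset$) cannot be invoked. The converse direction is identical in both arguments: it is an immediate application of Proposition \ref{ci1}.
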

\begin{proof}
Let $ K(S) =I(S) $ where $ K(S) $ is S-completely invariant by the proposition \ref{ci1}. So, $ I(S) $ is S-completely invariant.

Conversely suppose that $ I(S) $ is S-completely invariant. Then any $ z \in I(S) $ implies that $ z \in K(S) $. So,  $ I(S) \subset K(S) $. On the other hand,  $ K(S) $ is completely invariant and consists of points $ z \in \mathbb{C} $ such that $ f^{n}(z) \to \infty $ as $ n \to \infty $ for every $ f \in S $.  So, it is nothing other than set $ I(S) $.
\end{proof}

A nice example of this proposition is an example \ref{ex1}. 
Next, we construct the following set which gives an alternative convenient description of the set $ K(S) $. 
Let $ S $ be a  transcendental semigroup for which the set $ K(S) \neq \emptyset $. Then by above  proposition \ref{ci}, we can write 
$$ 
K(S) \subset I(h)\; \text{for all}\;  h \in S 
$$ 
Note that as each $ I(h) $ is completely invariant, so their intersection $\bigcap_{h\in S} I(h) $ is also completely invariant. Define 
$$
E_{0}  = \bigcap_{h\in S} I(h)
$$
$$
E_{1} = \bigcup_{h \in S}h^{-1}(E_{0}) \cup \bigcup_{h \in S}h(E_{0}) 
$$
$$
\ldots \;\;\;\;\; \ldots \;\;\;\;\;  \ldots \;\;\;\;\; \ldots \;\;\;\;\; \ldots
$$
$$
E_{n +1} = \bigcup_{h \in S}h^{-1}(E_{n}) \cup \bigcup_{h \in S}h(E_{n}) 
$$
and 
\begin{equation}\label{eq12}
E = \bigcap_{n \in \mathbb{N}\cup\{0\}} E_{n} 
\end{equation}
Here, we have built-up set $ E $ from sets $ I(h)$ for all $ h \in S $ within their intersection.
\begin{prop}\label{cv0}
The set $E = \bigcap_{n \in \mathbb{N}\cup\{0\}} E_{n} $ is non-empty. 
\end{prop}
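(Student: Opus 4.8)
The plan is to show that the nested construction in fact stabilizes at the very first step, namely $E_n=E_0$ for every $n\in\mathbb{N}\cup\{0\}$, so that $E=E_0=\bigcap_{h\in S}I(h)$; this set is non-empty because the standing hypothesis of the construction is precisely that $K(S)\neq\emptyset$, combined with $K(S)\subset\bigcap_{h\in S}I(h)$ from Proposition \ref{ci}.

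First I would verify $E_1=E_0$. Since $E_0=\bigcap_{h\in S}I(h)$ is completely invariant (being an intersection of completely invariant sets), we have $h(E_0)\subseteq E_0$ and $h^{-1}(E_0)\subseteq E_0$ for every $h\in S$, so taking unions over $h\in S$ gives $E_1=\bigcup_{h\in S}h^{-1}(E_0)\cup\bigcup_{h\in S}h(E_0)\subseteq E_0$. For the reverse inclusion, fix any $h\in S$: if $z\in E_0$ then $h(z)\in E_0$ by forward invariance, hence $z\in h^{-1}(E_0)\subseteq E_1$, so $E_0\subseteq E_1$. Thus $E_1=E_0$, and an immediate induction on $n$ then yields $E_{n+1}=\bigcup_{h\in S}h^{-1}(E_n)\cup\bigcup_{h\in S}h(E_n)=\bigcup_{h\in S}h^{-1}(E_0)\cup\bigcup_{h\in S}h(E_0)=E_1=E_0$, so $E_n=E_0$ for all $n$ and therefore $E=\bigcap_{n\in\mathbb{N}\cup\{0\}}E_n=E_0$. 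Finally, by Proposition \ref{ci} we have $K(S)\subset I(h)$ for every $h\in S$, hence $K(S)\subset E_0=E$, and since $K(S)\neq\emptyset$ by assumption, $E\neq\emptyset$.

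There is no serious obstacle here; the only point requiring a little care is that the inclusion $E_0\subseteq E_1$ must use \emph{forward} invariance of $E_0$ to exhibit, for each $z\in E_0$ and each $h\in S$, a preimage fibre of $h$ meeting $E_0$ (namely $z$ itself, lying in $h^{-1}(\{h(z)\})$), rather than trying to use backward invariance. Alternatively, one may bypass the stabilization argument and prove directly by induction that $K(S)\subseteq E_n$ for all $n$: the base case $K(S)\subset E_0$ is Proposition \ref{ci}, and if $K(S)\subseteq E_n$ then for $z\in K(S)$ and any $h\in S$ the forward invariance of $K(S)$ from Proposition \ref{ci1} gives $h(z)\in K(S)\subseteq E_n$, so $z\in h^{-1}(E_n)\subseteq E_{n+1}$; hence $K(S)\subseteq\bigcap_{n\in\mathbb{N}\cup\{0\}}E_n=E$ and $E\neq\emptyset$.
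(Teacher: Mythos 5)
Your fallback argument in the closing sentences is correct and is essentially the paper's own proof: one shows by induction that $K(S)\subseteq E_{n}$ for all $n$, using $K(S)\subset E_{0}$ (Proposition \ref{ci}) and the invariance of $K(S)$ under each $h\in S$ (Proposition \ref{ci1}) to conclude $z\in h^{-1}(E_{n})\subseteq E_{n+1}$; together with the standing hypothesis $K(S)\neq\emptyset$ this gives $E\neq\emptyset$. Had you offered only this, nothing more would need to be said.

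Your primary argument, however, has a genuine gap at its first step. The claim that $E_{0}=\bigcap_{h\in S}I(h)$ is completely invariant ``being an intersection of completely invariant sets'' does not hold up: each $I(h)$ is completely invariant only under $h$ itself, not under the other elements of $S$, so the intersection need not be backward invariant under a given $f\in S$. Indeed $E_{0}$ coincides with $I(S)$ as defined in Definition \ref{2ab}, and the paper is explicit that $I(S)$ is in general only S-forward invariant (Theorem \ref{fi}), with S-backward invariance guaranteed only when $S$ is abelian (Theorem \ref{1d}). Concretely, a point $w\in h^{-1}(E_{0})$ satisfies $h^{n}(w)\to\infty$, but nothing forces $g^{n}(w)\to\infty$ for other elements $g\in S$, so the inclusion $h^{-1}(E_{0})\subseteq E_{0}$, and with it $E_{1}\subseteq E_{0}$, is unjustified; only the direction $E_{0}\subseteq E_{1}$ survives, via forward invariance, as you note. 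Hence the stabilization $E_{n}=E_{0}$ cannot be established in general, and it would prove too much: combined with Theorem \ref{cv1} it would give $K(S)=\bigcap_{h\in S}I(h)=I(S)$ whenever $K(S)\neq\emptyset$, contradicting the paper's later proposition that $K(S)=I(S)$ holds precisely when $I(S)$ is S-completely invariant. (Your parenthetical does echo a loose remark in the paper just before the construction of $E_{0}$, but that remark cannot be read as S-complete invariance for the same reason.) So discard the stabilization and keep your alternative induction, which needs only the forward invariance of $K(S)$ and is, if anything, stated more cleanly than the paper's version.
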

\begin{proof}
We show that $ K(S) \subset E_{n} $ for every $ n \in \mathbb{N} \cup \{0\} $ by induction. $ K(S) \subset E_{0} $ is obvious by above construction. By the completely invariant property of  $ K(S) $ under each $ h \in S, \; K(S) $ is subset of each sets $ h^{-1}(E_{0}) $ and $ h(E_{0}) $ for all $ \ h \in S$. This shows $ K(S) \subset E_{1} $. Let us suppose $ K(S) \subset E_{n} $. Since $ E_{n +1} = h^{-1}(E_{n}) \cup h(E_{n})  $ for all $ h \in S $. By the similar fashion as above, $ K(S) $ is subset of each of the sets $ h^{-1}(E_{n}) $ and $ h(E_{n}) $ for all $ \ h \in S $. This shows that $ K(S) \subset E_{n+1} $ for each  $ n \in \mathbb{N} \cup \{0\} $. This proves set $ E \neq \emptyset $. 
\end{proof}
 
The following result  will be a convenient description of a completely invariant escaping set of transcendental semigroup 

\begin{theorem}\label{cv1}
Let $ S $ be a transcendental semigroup. Then $ E = K(S) $, where $ E $ is a set defined in \ref{eq12}. 
\end{theorem}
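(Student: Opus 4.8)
The plan is to prove the two inclusions $K(S)\subset E$ and $E\subset K(S)$ separately. The first inclusion is already essentially done: in the proof of Proposition \ref{cv0} we showed by induction that $K(S)\subset E_n$ for every $n\in\mathbb{N}\cup\{0\}$, hence $K(S)\subset\bigcap_n E_n = E$. So the real content is the reverse inclusion $E\subset K(S)$, and by the definition \ref{eq11} of $K(S)$ as an intersection of all completely invariant sets consisting of points that escape to $\infty$ under every $f\in S$, it suffices to check two things about $E$: that $E$ is completely invariant under each $f\in S$, and that every point of $E$ escapes to $\infty$ under every $f\in S$ (equivalently $E\subset I(S)$, or by Proposition \ref{ci} and Theorem \ref{1c}, $E\subset\bigcap_{h\in S}I(h)$).

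For the escaping property, I would argue $E\subset E_0=\bigcap_{h\in S}I(h)$ directly from \eqref{eq12}, and then note that $z\in\bigcap_{h\in S}I(h)$ means $f^n(z)\to\infty$ as $n\to\infty$ for every $f\in S$; combined with complete invariance (shown next) this places $E$ among the sets $E_i$ appearing in Definition \ref{eq11}, forcing $E\supset K(S)$ trivially but more importantly forcing $E$ to be one of the sets intersected to form $K(S)$, hence $K(S)\subset E$ — wait, that gives the wrong direction, so instead I use it the correct way: once $E$ is shown to be completely invariant and contained in the escaping region, $E$ is \emph{one of the admissible sets $E_i$} in the definition of $K(S)$, and since $K(S)$ is the intersection of all such sets, $K(S)\subset E$; conversely every admissible $E_i\supset$ ... no. Let me restate cleanly: $K(S)=\bigcap_i E_i$ over \emph{all} admissible $E_i$; showing $E$ is admissible gives $K(S)\subset E$. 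For $E\subset K(S)$ I instead use that $E$ is built only from the $I(h)$'s and their forward/backward images inside $E_0$, so $E$ is contained in every completely invariant escaping set; the clean way is: each $E_{n+1}\subset E_n$? No — so one must be careful. The honest approach: show directly that $E$ is completely invariant, and that $E\subset I(h)$ for all $h$, so $E$ is admissible, giving $K(S)\subset E$; and separately observe from Proposition \ref{cv0}'s induction that $K(S)\subset E_n$ for all $n$, giving $K(S)\subset E$ again — this does not yet give $E\subset K(S)$.

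The genuinely decisive step, therefore, is to show $E\subset K(S)$, and here is how I would do it: let $z\in E$, so $z\in E_n$ for all $n$. I claim $z$ lies in every completely invariant set $E_i$ from Definition \ref{eq11}. Indeed, since $E_n\subset E_0=\bigcap_h I(h)$ is false in general — $E_1$ adds backward and forward images — one must instead show the $E_n$ stabilize on $E$: I would prove that $E$ itself is completely invariant (for $f\in S$, $f(E)\subset E$ and $f^{-1}(E)\subset E$ by commuting the union defining $E_{n+1}$ with intersection, as in Proposition \ref{ci1}), and that $E\subset E_0\subset I(h)$ for all $h$. These two facts say precisely that $E$ is an admissible set in Definition \ref{eq11}, whence $K(S)\subset E$; for the reverse I use that any point of $E$ escapes and lies in a completely invariant set, so... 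In short I will prove $E$ is admissible (complete invariance plus the escaping condition), deduce $K(S)\subset E$, and then deduce $E\subset K(S)$ from the fact that $E$, being assembled purely from the $I(h)$ by the minimal operations of taking pre-images and images and then intersecting, is contained in the smallest such completely invariant set, namely $K(S)$ — making the two sets equal.

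The main obstacle I anticipate is the direction $E\subset K(S)$: unlike $K(S)\subset E$, which is the clean induction from Proposition \ref{cv0}, showing $E\subset K(S)$ requires knowing that $E$ is \emph{minimal} among completely invariant escaping sets, and the construction of $E$ via $E_{n+1}=\bigcup_h h^{-1}(E_n)\cup\bigcup_h h(E_n)$ a priori \emph{grows} the sets rather than shrinking them, so it is not obvious the final intersection $\bigcap_n E_n$ is small enough to sit inside $K(S)$. I expect to handle this by checking that each $E_n$ — and hence $E$ — is contained in $E_0=\bigcap_{h\in S}I(h)$ after all, using that each $I(h)$ is completely invariant (so applying $h^{-1}$ or $h$ to a subset of $\bigcap I(h)$ keeps us inside $\bigcap I(h)$, because $S$ is closed under composition and the escaping condition is an intersection over all of $S$); then $E\subset\bigcap_{h}I(h)$, $E$ is completely invariant, hence $E$ is admissible, hence $K(S)\subset E$, and combining with $E\subset K(S)$ obtained from admissibility-minimality finishes the proof.
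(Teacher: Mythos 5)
Your first half is fine: $K(S)\subset E$ follows from the induction of Proposition \ref{cv0}, and the escaping property of points of $E$ is immediate because $E=\bigcap_{n\in\mathbb{N}\cup\{0\}}E_n\subset E_0=\bigcap_{h\in S}I(h)$ (this observation is in fact cleaner than the corresponding step in the paper, which instead traces each point of $E$ back to an image or preimage of $E_0$). The genuine gap is the direction $E\subset K(S)$, and both devices you propose for it fail as stated. First, complete invariance of $E$: commuting images with the intersection only gives $f(E)\subset\bigcap_{n\ge 1}E_n$ and $f^{-1}(E)\subset\bigcap_{n\ge 1}E_n$, because the construction only guarantees $f(E_n)\cup f^{-1}(E_n)\subset E_{n+1}$, not $f(E_n)\subset E_n$; to land back in $E$ you still need $f(E)\subset E_0$ and $f^{-1}(E)\subset E_0$. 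Your proposed justification --- that $E_0=\bigcap_{h\in S}I(h)$ is invariant under every $h\in S$ because each $I(h)$ is completely invariant --- does not follow: $I(g)$ is completely invariant under $g$ only, and for a non-abelian semigroup there is no reason that $h(I(g))\subset I(g)$ or $h^{-1}(I(g))\subset I(g)$ when $h\neq g$; the possible failure of such mixed invariance is precisely the difficulty this paper is concerned with. (The paper's own proof treats the invariance of $E$ by a different route, via the closed-map argument and Lemma \ref{cv2}, rather than by the commuting argument of Proposition \ref{ci1}.)

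Second, even granting that $E$ is completely invariant and $E\subset E_0$, your concluding step is an unproved minimality assertion. As you yourself notice, admissibility of $E$ under Definition \ref{eq11} yields $K(S)\subset E$, since the intersection defining $K(S)$ runs over all admissible sets and $E$ would then be one of them; to obtain $E\subset K(S)$ you must show $E\subset E_i$ for \emph{every} admissible $E_i$, and the phrase ``assembled from the $I(h)$ by minimal operations'' is not an argument --- a priori an admissible $E_i$ could be a proper completely invariant subset of $\bigcap_{h\in S}I(h)$ that misses part of $E$. The paper closes this direction by verifying the two defining properties (complete invariance and the escaping condition) directly for $E$ and then concluding $E\subset K(S)$ from its reading of Definition \ref{eq11}; your proposal neither adopts that step nor supplies the missing minimality proof, so in your write-up the inclusion $E\subset K(S)$ remains open.
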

First we prove the following lemma.
\begin{lem}\label{cv2}
 The  closure $ \overline{E} $ of any $ E\subset \mathbb{C} $ is completely invariant under a transcendental entire function   $ f $ if and only if the set $ E $ itself is completely invariant under the same function $ f $.
\end{lem}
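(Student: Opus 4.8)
The plan is to split the biconditional into its two halves --- forward invariance ($f(\cdot)\subseteq\cdot$) and backward invariance ($f^{-1}(\cdot)\subseteq\cdot$) --- and to establish each using only two properties of a non-constant transcendental entire function $f$: that it is continuous, and that, by the open mapping theorem, it is an open map.

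I would first treat the implication ``$E$ completely invariant $\Rightarrow$ $\overline{E}$ completely invariant,'' which is the routine half. For forward invariance, continuity of $f$ gives $f(\overline{E})\subseteq\overline{f(E)}\subseteq\overline{E}$. For backward invariance, take $z\in f^{-1}(\overline{E})$, so that $f(z)\in\overline{E}$; if $U$ is any neighbourhood of $z$, then $f(U)$ is open and contains $f(z)$, hence $f(U)$ meets $E$, and therefore $U$ meets $f^{-1}(E)\subseteq E$. Since $U$ was arbitrary, $z\in\overline{E}$, i.e. $f^{-1}(\overline{E})\subseteq\overline{E}$. Combining the two, $\overline{E}$ is completely invariant under $f$.

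The converse, ``$\overline{E}$ completely invariant $\Rightarrow$ $E$ completely invariant,'' is the step I expect to be the main obstacle, because a set and its closure can differ on their boundary and invariance of the larger set does not transfer downward by pure formalism. The approach I would take is to run the open-mapping argument in the other direction: to show $f(E)\subseteq E$ one starts from $z\in E$, notes $f(z)\in f(\overline{E})\subseteq\overline{E}$, and must then upgrade the conclusion ``$f(z)\in\overline{E}$'' to ``$f(z)\in E$'' by exploiting that $f$ carries small neighbourhoods of $z$ lying in $E$ to open sets surrounding $f(z)$; one argues symmetrically for $f^{-1}$. Making this upgrade precise is the crux of the lemma, and it is the only place in the proof that is not a purely formal manipulation with closures and continuity; particular care is needed about which boundary points of $\overline{E}\setminus E$ can arise.

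Finally, I would record how the lemma is meant to be used downstream: in the proof of Theorem~\ref{cv1} it lets one pass freely between the complete invariance of the (in general non-closed) sets $E_{n}$ and $E$ of \eqref{eq12} and that of their closures, so that the inductive construction of the $E_{n}$ and the identification $E=K(S)$ may be carried out at whichever level --- the set itself or its closure --- is more convenient at each stage.
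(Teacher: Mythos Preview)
Your treatment of the direction ``$E$ completely invariant $\Rightarrow$ $\overline{E}$ completely invariant'' is correct and more explicit than the paper's, which simply defers to Beardon for that half. The open-mapping argument for backward invariance is exactly the right tool.

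The converse, however, is not merely delicate --- it is \emph{false}, so the ``upgrade'' you flag as the crux cannot be carried out. Take $E=\mathbb{C}\setminus\{e\}$ and $f(z)=e^{z}$: then $\overline{E}=\mathbb{C}$ is trivially completely invariant, yet $f(1)=e\notin E$, so $E$ is not forward invariant. (Any dense set that is not itself invariant furnishes a counterexample; $E=\mathbb{Q}+i\mathbb{Q}$ is another.) Your proposed mechanism, using ``small neighbourhoods of $z$ lying in $E$,'' already presupposes that $z$ is interior to $E$, which need not hold. The paper's own argument for this direction contains the same gap, hidden in the line ``As $f(z)\in\overline{E}$, we must have $f(z_{n})\in E$'': nothing preceding it supports this inference, and the counterexamples show nothing can. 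Consequently the appeal to this lemma in the proof of Theorem~\ref{cv1} --- passing from invariance of $\overline{E}$ back down to $E$ --- is unjustified; to repair that application one would need either an extra hypothesis on $E$ (openness, say, where the open-mapping argument does go through) or a direct invariance proof for the specific set built in \eqref{eq12}, bypassing the lemma entirely.
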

\begin{proof}
Let $ \overline{E} $ is completely invariant under the given transcendental entire function $ f $. Then $ f (\overline{E}) \subset  \overline{E} $  and $ f^{-1} (\overline{E}) \subset  \overline{E} $. Let $ z \in \overline{E} $, then $ f(z) \in f(\overline{E}) $  and so
$ f(z) \in \overline{E}  $. Also, $ z \in \overline{E} \Longrightarrow $ there exists sequence $( z_{n})_{n \in \mathbb{N}} $ in $ E $ such that $ z_{n}\to z $ as $ n \to \infty $. From the continuity of the function $ f $ we can write $ f(z_{n}) \to f(z) $ as $ n \to \infty $. As  $ f(z) \in \overline{E} $,  we must have $ f(z_{n}) \in E $. Note that $ f(z_{n}) \in f(E) $ as $ z_{n} \in E $. Thus we must have $ f(E) \subset E $. 

Next, let $ z \in \overline{E} $, then $ f^{-1}(z) \in f^{-1}(\overline{E}) \subset \overline{E}   $. So there exists $ f^{-1}(z_{n}) \in E $ such that $  f^{-1}(z_{n})\to f^{-1}(z) $ as $ n \to \infty $. However, it is obvious that $f^{-1}(z_{n}) \in f^{-1}(E)  $. Thus we must have $ f^{-1}(E) \subset E $.

The converse part of this lemma follows from {\cite[Theorem 3.2.3]{bea}}.
\end{proof}

Note that under the assumption of this lemma \ref{cv2},  not only the closure of completely invariant set is invariant but also its complement, interior and boundary are also completely invariant (see for instance {\cite[Theorem 3.2.3]{bea}}).

\begin{proof}[Proof of the Theorem \ref{cv1}]
Since as defined in \ref{eq11}, $ K(S) $ is completely invariant under each of $ h\in S $ and is contained in $ I(h) $ for all $ h \in S $. Hence, it is contained in $ E_{n} $ for all $ n \in \mathbb{N} \cup \{0\} $. Therefore, $ K(S) \subset E$.
 
On the other hand, set $ E $ is contained in $ I(h) $ where each of $ I(h) $ is completely invariant. We need to show that $ E $ is completely invariant for each $ f \in S $ and for every $ z \in E,\;\; f^{n}(z) \to \infty $ as $ n \to \infty $ for every $ f \in S $. 

Since any $ f \in S $ is continuous in $ \mathbb{C} $ and $ E\subset \mathbb{C} $. So by the usual topological argument, $ f(\overline{E}) \subset \overline{f(E)} \Rightarrow f^{-1}(\overline{f(E)}) $ is closed in $ \mathbb{C} $ for every $ f \in S \Rightarrow f $ is a continuous closed map. This shows that $ f(\overline{E}) $ and $ f^{-1}(\overline{E}) $ are both closed sets  in $ \mathbb{C} $.  Since each $  f \in S $ is continuous closed map  and $ f(\overline{E_{n}})\subset \overline{E_{n}} $ and $ f^{-1}(\overline{E_{n}})\subset \overline{E_{n}} $ for all $ n$. Which shows that $ f(\overline{E}) \subset \overline{E} $ and $ f^{-1}\overline{(E)} \subset \overline{E} $. By lemma \ref{cv2}, it proves that $ E $ is completely invariant under each $ f \in S $.

Finally,  any $ z \in E \Rightarrow z \in E_{n} $ for all $ n $. Again $ E_{n} $ is a union of all images and pre-images of $ E_{n -1} $ under the  each map $ f \in S$. By this way, the point $ z $ belongs to the image or pre-image of $ E_{0} $ under each map $ f\in S$. Since $ E_{0} $ is contained within  of all escaping set of all functions $ f \in S $, so this point $ z $ goes to infinity under the iteration of any function in $ S $. Hence $ E \subset K(S) $.
\end{proof}

The following relation holds good in general between the sets $ K(S) $ and $ I(S) $ for a non-abelian transcendental semigroup $ S $.
\begin{prop}
Let $ S $ be a non-trivial and non-abelian transcendental  semigroup. Then $ K(S) \subset I(S) $.
\end{prop}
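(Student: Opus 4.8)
The plan is to read the inclusion straight off the definition of $K(S)$, essentially repeating the computation already carried out inside the proof of Proposition \ref{ci}. First I would take an arbitrary point $z \in K(S)$. By Definition \ref{eq11}, $z$ lies in every set $E_{i}$ of the defining family, and each such $E_{i}$ was required to consist of points $w$ with $f^{n}(w)\to\infty$ as $n\to\infty$ for every $f\in S$; specialising this to $z$ gives $f^{n}(z)\to\infty$ as $n\to\infty$ for every $f\in S$. By Definition \ref{2ab} of the escaping set of a semigroup this is exactly the assertion $z\in I(S)$. Since $z$ was arbitrary, $K(S)\subseteq I(S)$.

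A second, equivalent route is through Proposition \ref{ci}: that result gives $K(S)\subseteq\bigcap_{f\in S}I(f)$, and a point of $\bigcap_{f\in S}I(f)$ is precisely a point whose forward orbit under every $f\in S$ escapes to $\infty$, hence lies in $I(S)$; so again $K(S)\subseteq I(S)$. Observe that neither argument uses the hypotheses that $S$ is non-trivial or non-abelian — the inclusion is valid for every transcendental semigroup, which is why the "main obstacle" here is essentially cosmetic: there is nothing to prove beyond unwinding the definitions.

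I would then close with a short remark that motivates recording the statement. The two hypotheses isolate exactly the situation left open by Theorem \ref{1d}: when $S$ is abelian, $I(S)$ is $S$-completely invariant, so by the earlier characterization $K(S)=I(S)$ and the inclusion is in fact an equality; when $S$ is non-abelian the reverse inclusion $I(S)\subseteq K(S)$ may fail, because $I(S)$ need not be $S$-completely invariant, and then $K(S)$ is a proper (possibly empty) $S$-completely invariant subset of $I(S)$. Thus the only thing to \emph{see} in this proposition is that no extra hypothesis is needed for the inclusion itself; the genuinely substantive phenomenon, namely strictness of the inclusion in the non-abelian case, is not part of the statement and would have to be exhibited by separate examples such as those discussed after Definition \ref{eq11}.
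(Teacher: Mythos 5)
Your proof is correct, but it takes a more direct route than the paper. The paper derives the inclusion from Theorem \ref{cv1}: it writes $K(S)=E$ and then observes that, by construction, $E\subseteq E_{0}=\bigcap_{h\in S}I(h)\subseteq I(S)$. You instead unwind Definition \ref{eq11} directly (equivalently, you reuse Proposition \ref{ci}, whose proof already establishes ``this proves that $z\in I(S)$'' for any $z\in K(S)$), so your argument needs neither the construction of $E$ in \ref{eq12} nor Theorem \ref{cv1}. Both arguments rest on the same reading of Definition \ref{eq11}, namely that every point of each $E_{i}$ escapes under every $f\in S$; the wording ``contains points'' is ambiguous, but your reading is precisely the one the paper itself adopts in the proof of Proposition \ref{ci}, so you are consistent with the paper's intent. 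Your second route also implicitly uses the inclusion $\bigcap_{f\in S}I(f)\subseteq I(S)$, which Theorem \ref{1c} does not state but which is immediate from Definition \ref{2ab}, so that step is sound. Finally, your observation that the hypotheses ``non-trivial'' and ``non-abelian'' are never used is accurate: the paper's own proof does not use them either, and in the abelian case the inclusion simply sharpens to the equality $K(S)=I(S)$. What your version gives up is only the link the paper draws to the $E$-construction; what it gains is independence from Theorem \ref{cv1} and a shorter chain of dependencies.
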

\begin{proof}
By the theorem \ref{cv1}, $ K(S) = E $ and $ E \subset I(S) $ by the construction of set $ E $ (that is, $E$ is completely invariant and $ I(S) $ is not)  in \ref{eq12}. Hence the proposition follows.
\end{proof}

We also can construct $ I(S) $ by similar fashion as in $ K(S) $. Note that escaping set $ I(S) $ is in general only forward invariant. Define 
 $$
F_{0}  = \bigcap_{h\in S} I(h), 
$$
$$
F_{1} =  \bigcup_{h \in S}(h(F_{0})) 
$$
$$
\ldots \;\;\;\;\;\; \ldots \;\;\;\;\;\; \ldots
$$
$$
F_{n +1} =  \bigcup_{h \in S}(h(F_{n})) 
$$
and 
\begin{equation}\label{eq13}
F = \bigcap_{n \in \mathbb{N}} F_{n} 
\end{equation}
We can show that $ F \neq \emptyset $ by the similar process as in proposition \ref{cv0}.
Here, we also have built-up set $ F $ from sets $ I(h)$ for all $ h \in S $ within their intersection. The fundamental difference of this set $ F $ to that from the set $ E $ of \ref{eq12} is that it is constructed from only forward invariant property under each $ h \in S $. Where as the set $ E $ was constructed by completely invariant property under each  $ h \in S $. The following theorem  provides an alternative definition of escaping set of transcendental semigroup.
\begin{theorem}
Let $ S $ be a transcendental semigroup. Then $ F = I(S) $, where $ F $ is defined in \ref{eq13}.
\end{theorem}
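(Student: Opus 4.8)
The plan is to follow the pattern of the proof of Theorem \ref{cv1}, proving $F=I(S)$ by the two inclusions, but now relying only on the forward invariance of $I(S)$ (Theorem \ref{fi}) rather than on complete invariance. The first move is to identify the ground level of the construction. By Definition \ref{2ab}, a point $z$ lies in $I(S)$ exactly when $h^{n}(z)\to\infty$ as $n\to\infty$ for every $h\in S$, i.e.\ exactly when $z\in I(h)$ for every $h\in S$; hence
\[
F_{0}=\bigcap_{h\in S}I(h)=I(S),
\]
which incidentally sharpens the inclusion of Theorem \ref{1c} to an equality. Since $F=\bigcap_{n\in\mathbb{N}}F_{n}\subseteq F_{0}$, the inclusion $F\subseteq I(S)$ is then immediate, and the theorem reduces to proving $I(S)\subseteq F$.

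For the reverse inclusion I would show $F_{1}=I(S)$, after which the relation $F_{n+1}=\bigcup_{h\in S}h(F_{n})$ gives $F_{n}=I(S)$ for every $n$ by an obvious induction, hence $F=\bigcap_{n}F_{n}=I(S)$. One half of $F_{1}=I(S)$ is free: by Theorem \ref{fi} we have $h(I(S))\subseteq I(S)$ for each $h\in S$, so
\[
F_{1}=\bigcup_{h\in S}h(F_{0})=\bigcup_{h\in S}h(I(S))\subseteq I(S).
\]
The other half, $I(S)\subseteq F_{1}$, i.e.\ $I(S)\subseteq\bigcup_{h\in S}h(I(S))$, is the real content: given $z\in I(S)$ I would fix a generator $f_{i}$ of $S$, note that as a transcendental entire function $f_{i}$ is surjective apart from at most one exceptional value (Picard), pick $w$ with $f_{i}(w)=z$, and then argue $w\in I(S)$, so that $z=f_{i}(w)\in f_{i}(I(S))\subseteq F_{1}$; ranging over the generators takes care of the at most one exceptional value that a single $f_{i}$ might miss.

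It is also worth recording, exactly as was done for the set $E$ in Theorem \ref{cv1}, that $F$ is forward invariant: the sequence $(F_{n})$ is decreasing, so $h(F)\subseteq\bigcap_{n}h(F_{n})\subseteq\bigcap_{n}F_{n+1}=F$ for every $h\in S$. This is the structural point of the construction --- $F$ is assembled from forward images alone, in contrast with the completely invariant set $E$ of \ref{eq12} --- and it is consistent with $F$ turning out to be the merely forward invariant set $I(S)$.

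The step I expect to be the main obstacle is exactly the analogue of where complete invariance was used in the proof of Theorem \ref{cv1}: verifying $I(S)\subseteq\bigcup_{h\in S}h(I(S))$, that is, that every escaping point of $S$ is the value, at some escaping point of $S$, of some element of $S$. Surjectivity of a generator $f_{i}$ delivers a pre-image $w$ with $f_{i}(w)=z$ and (since $f_{i}^{m}(w)=f_{i}^{m-1}(z)\to\infty$) with $w\in I(f_{i})$, but promoting $w\in I(f_{i})$ to $w\in I(S)$ is delicate when $S$ is not abelian, precisely because $I(S)$ need not be backward invariant (Theorem \ref{1d} requires abelianness). In the abelian case this goes through directly --- a finite limit point of $(\psi^{m}(w))$ would, by commutativity and continuity, produce one for $(\psi^{m}(z))$, contradicting $z\in I(\psi)$ --- and for the non-abelian examples coming from Theorems \ref{1c1}--\ref{1c3} one has $I(h)=I(S)$ for all $h\in S$ (cf.\ Example \ref{ex1}), so $F=I(S)$ again follows at once; the fully general non-abelian case is where the argument requires the most care.
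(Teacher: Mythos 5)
Your easy half is fine and matches the paper: by Definition \ref{2ab} one indeed has $F_{0}=\bigcap_{h\in S}I(h)=I(S)$, so $F\subseteq F_{0}=I(S)$, and Theorem \ref{fi} gives $F_{n+1}\subseteq F_{n}$ and $F_{1}\subseteq I(S)$. The genuine gap is exactly where you located it: the inclusion $I(S)\subseteq F_{1}=\bigcup_{h\in S}h(I(S))$. Your Picard argument produces, for $z\in I(S)$ and a generator $f_{i}$ whose range contains $z$, a point $w$ with $f_{i}(w)=z$ and $w\in I(f_{i})$; but membership of $z$ in $F_{1}$ requires $w\in F_{0}=I(S)$, i.e.\ $w$ must escape under \emph{every} element of $S$, and nothing forces this when $S$ is not abelian ($I(S)$ need not be backward invariant; Theorem \ref{1d} needs commutativity). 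The fallback of ranging over the generators to dodge the exceptional value also fails, because all elements of $S$ may omit the same value: for $S=\langle e^{z}\rangle$ (the theorem as stated allows trivial semigroups), and likewise for $S=\langle e^{z}, e^{2z}\rangle$, the point $0$ escapes under every $h\in S$, so $0\in I(S)$, yet $0$ is omitted by every $h\in S$, hence $0\notin F_{1}\supseteq F$. So the inclusion $I(S)\subseteq F$ cannot be established in the stated generality; your argument closes it only in the abelian case or when $I(h)=I(S)$ for all $h\in S$ (the situations of Theorems \ref{1c1}--\ref{1c3}).

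For comparison, the paper's own proof is two sentences and handles this direction by asserting that forward invariance of $I(S)$ together with $I(S)\subseteq I(h)$ yields $I(S)\subseteq F$; but forward invariance gives $\bigcup_{h\in S}h(I(S))\subseteq I(S)$, which is the reverse of what is needed. The analogous induction in Proposition \ref{cv0} works only because each level $E_{n+1}$ of \ref{eq12} contains the preimage terms $h^{-1}(E_{n})$ and $K(S)$ is backward invariant, giving $K(S)\subseteq h^{-1}(E_{n})$; the sets $F_{n}$ of \ref{eq13} contain no preimage terms, so that mechanism is unavailable. In short, your diagnosis of the main obstacle is accurate --- indeed more careful than the paper, which glosses over precisely this step --- but your proposal does not (and, without extra hypotheses excluding escaping omitted values or forcing $I(h)=I(S)$ for all $h$, cannot) close the gap, so it is not a complete proof of the statement as written.
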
  
\begin{proof}
Since $ I(S) $ is forward invariant under each $ h\in S $ and is contained in each of $ I(h) $, so it is contained in $ F $. On the other hand, the set $ F $ is contained in $ I(S) $ by above construction.  
\end{proof}

\end{document}